%% file: main.tex
\documentclass{article}

\usepackage{arxiv}
\usepackage[T1]{fontenc}
\usepackage[utf8]{inputenc}

\usepackage{hyperref}
\usepackage{url}
\usepackage{booktabs}
\usepackage{colortbl}
\usepackage{makecell}
\usepackage{multirow}
\renewcommand{\arraystretch}{1.2}
\usepackage{amsmath,amssymb,amsfonts,amsthm}
\usepackage{nicefrac}
\usepackage{microtype}
\usepackage{graphicx}
\usepackage{subcaption}
\usepackage[
    backend=biber,
    style=authoryear,
    natbib=true,
]{biblatex}
\usepackage{fancybox}
\cornersize{0.75}

\usepackage{physics}
\usepackage{bm}
\usepackage{siunitx}
\usepackage{todonotes}
\usepackage{xfrac}

\usepackage[frozencache]{minted}

\newtheorem{lemma}{Lemma}

\definecolor{codebg}{HTML}{F7F7F7}
\definecolor{tabgray}{HTML}{EFEFEF}
\definecolor{tabred}{HTML}{EAA1A1}

\definecolor{plotblue}{HTML}{0072B2}
\definecolor{plotorange}{HTML}{E69F00}
\definecolor{plotgreen}{HTML}{009E73}

\setminted{
  bgcolor=codebg,
  fontsize=\small,
  breaklines,
  autogobble,
  tabsize=2,
  frame=lines,
  framesep=3mm,
  linenos,
  numbersep=8pt
}

\usepackage{mathtools}

\usepackage[verbose]{newunicodechar}

\newunicodechar{š}{\v{s}}
\newunicodechar{ń}{\'{n}}

\usepackage{calc}

\usepackage{pgfplots}
\usetikzlibrary{calc,angles,quotes}

\usepackage[inline]{enumitem} 

\newcommand{\trp}{\mathsf{T}}
\DeclareMathOperator{\fl}{\textrm{fl}}
\DeclareMathOperator{\sym}{\textrm{sym}}
\newcommand{\epsmach}{\epsilon_\text{mach}}

\title{Running error bounds in finite element kernels}

\author{%
    Michal HABERA \\
    Department of Engineering\\
    University of Luxembourg\\
    \texttt{michal.habera@uni.lu} \\
    \And
    Paul T. KÜHNER \\
    Department of Engineering\\
    University of Luxembourg\\
    \texttt{paul.kuehner@uni.lu} \\
    \And
    Matteo CROCI \\
    Basque Center for Applied Mathematics\\
    Ikerbasque\\
    University of the Basque Country\\
    \texttt{mcroci@bcamath.org} \\
    \And
    Andreas ZILIAN \\
    Department of Engineering\\
    University of Luxembourg\\
    \texttt{andreas.zilian@uni.lu} \\
}

\date{September 2026}
\renewcommand{\headeright}{Preprint}
\renewcommand{\undertitle}{Preprint}

\hypersetup{
pdftitle={Running error bounds in finite element kernels},
pdfauthor={M.~Habera, P.T.~Kühner, M.~Croci, A.~Zilian},
}

\begin{document}
\maketitle

\begin{abstract}
	Rounding errors in finite element computations can lead to a complete loss of accuracy, stalled convergence, and incorrect results.
	Moreover, the effects of rounding errors accumulated within automatically generated and compiled kernels are difficult to analyze a priori.
	We present the first software framework for automated rounding error estimation within finite element kernels.
	The proposed methodology is based on an a posteriori technique called Running Error Analysis (REA), where a forward error estimate is automatically computed concurrently with the value.
	An open-source implementation is provided for the FEniCS Form Compiler (FFCx), based on a C++ backend for generating type-generic templated kernels over a custom arithmetic type that tracks both the value and its error estimate.

	We demonstrate REA on two examples.
	First, we use it to detect catastrophic cancellation in the assembly of a Neo-Hooke hyperelastic model in the small deformation regime.
	A series expansion circumvents the cancellation problem and the computed error estimates show this.
	Second, we study the assembly of the Laplace operator on a near-degenerate mesh.
	We demonstrate that our REA implementation typically incurs only 2-4x performance overhead.
	Applications of this work include robust reduced-precision computations in embedded systems, numerical debugging of new, possibly ill-conditioned or unstable PDE formulations, and guiding the design of mixed-precision kernels.
\end{abstract}

\keywords{Catastrophic Cancellation, Finite Element Method, Reduced Precision Computing, Rounding Error, Running Error Analysis}

\section{Introduction}
\label{sec:introduction}

Rounding errors can limit the accuracy of finite element computations and stall the convergence of iterative solvers.
Estimating the magnitude of these errors is crucial when using lower-precision arithmetic, but the effects of rounding errors accumulated within automatically generated and compiled kernels are difficult to analyze a priori.
In this work, we present a method for automated rounding error estimation in the assembly of finite element operators.
We propose a software framework that combines \emph{code generation}, i.e., the translation of the mathematical description of Partial Differential Equations (PDEs) to a lower-level programming language \citep{kirby2006compiler,alnaes2014ufl}, with \emph{running error analysis} (REA) for a posteriori rounding error bounds, see \citet[\S~3]{wilkinson1986erroranalysis} and \citet[\S~3.3]{higham2002accuracy}.
The resulting algorithm automatically computes rounding error bounds alongside the assembled values and reveals accuracy loss due to numerical instability and ill-conditioning, with modest overhead in most of the tested cases.

Double precision was available from the dawn of the computer Finite Element Method (FEM) in the 1960s, but memory constraints encouraged single precision storage \citep{melosh1969manipulationerrors}.
The IEEE single- and double-precision arithmetic became widely supported in hardware during the late 1980s and 1990s \citep[\S~2]{higham2022mixedprecision}, making the 64-bit arithmetic increasingly practical.
As a consequence, double precision became the conservative default in many engineering finite element software packages.

This conservatism can be justified when rounding errors limit the accuracy of constitutive evaluations or the convergence of iterative solvers.
In solid mechanics, for example, evaluating finite-strain constitutive models near the undeformed state can involve cancellation, as illustrated by the Neo-Hooke example in Section~\ref{sec:examples}.
Contact algorithms \citep[Ch.~10]{wriggers2006contact} and non-smooth, multi-surface plasticity algorithms \citep{simo1988multisurface,simo1998inelasticity} also use contact gaps and yield-function values to determine which constraints are active.
Rounding errors near activation thresholds can alter these decisions.
In addition, industrial CAD geometries may lead to ill-shaped elements and poorly conditioned element-local operators \citep[\S~3]{shewchuk2002elementquality}, increasing sensitivity to rounding errors.
Higher working precision can reduce these errors and help keep them below the tolerances required by the simulation.

The development of Artificial Intelligence (AI) and Large Language Models (LLMs) is shaping the landscape of available hardware and the numerical formats it supports \citep{jouppi2017tpu,micikevicius2022fp8}.
In particular, double precision is not prioritized by AI hardware manufacturers, and developers and researchers wanting to harness the latest hardware capabilities are forced to adapt their algorithms to lower-precision computations.
The result is a wide range of reduced- and mixed-precision algorithms, which leverage the performance gains of low-precision computations while controlling rounding error accumulation.
We refer to the review articles of \citet{abdelfattah2021survey} and \citet{higham2022mixedprecision} for an overview of existing methods.
The advantage of using lower precision is that these formats favor not only the applications with high arithmetic intensity \citep{williams2009roofline}, but also applications that are memory bandwidth bound, due to improved memory traffic utilization.

Overall, the rounding error behaviour in a typical finite element industrial simulation is complex, and we only try to address a small part of it in this work.

\paragraph{Rounding error detection in numerical software}

This complexity motivates the use of automated tools to assess how rounding errors propagate through numerical software.
The estimation of rounding errors in a program is addressed by an extensive list of libraries.
Based on the technique that these tools follow, one can distinguish the following approaches.

\emph{Dynamic local-error propagation} techniques augment each floating-point operation so that it returns a pair $(\hat f, e_f)$ of the computed value and an estimate of its accumulated error, and update $e_f$ after each operation.
The local rounding error can be estimated from a floating-point model or computed using error-free transformations or higher-precision values, see subsections below.
Examples include CENA \citep{braconnier2002cena,hovland2021forwardcena}, Shaman \citep{demeure2022shaman}, EFTSanitizer \citep{chowdhary2022eftsanitizer}, Accurate Residues \citep{he2026accurateresidues}, ADAPT \citep{menon2018adapt}, CHEF\_FP \citep{singh2023cheffp}, ATOMU \citep{zou2020atomu}, and FPCC \citep{yi2024fpcc}.
Advantages are that this approach provides error information for intermediate operations, computation happens alongside values in one execution, and error-free transformations can be used to avoid expensive high-precision arithmetic.
Disadvantages are the additional arithmetic and storage overhead, potentially overestimated errors for unsigned bounds, and the requirement to implement special rules for every relevant operation.

\emph{High-precision shadow execution} methods evaluate the floating-point program twice: once for the floating-point value $\hat f$, and once for a higher-precision reference value $f_\text{ref}$.
The difference $\hat f - f_\text{ref}$ is then used as an estimate of the accumulated error.
Examples include FPDebug \citep{benz2012fpdebug}, FPSanitizer and PFPSanitizer \citep{chowdhary2020fpsanitizer,chowdhary2021pfpsanitizer}, NSan \citep{courbet2021nsan}, Herbgrind \citep{sanchezstern2018herbgrind}, and SHVAL \citep{lam2016shval}.
Advantages include a directly interpretable, signed difference between the computed value and reference, and no need for first-order error-propagation approximations.
On the other hand, high-precision arithmetic and shadow storage can be expensive, in particular when precision is emulated.
The reference still has rounding errors and can itself suffer numerical instability.

\emph{Stochastic methods} repeatedly evaluate the program while randomly perturbing rounding decisions or mantissas.
The outcomes are used to estimate variance, confidence intervals, or numerical instabilities.
These methods are implemented in CADNA \citep{jezequel2008cadna}, Verificarlo \citep{denis2016verificarlo}, VERROU \citep{fevotte2016verrou}, and SAM \citep{graillat2011sam}.
These approaches provide additional statistical information such as variability, significant digits, and confidence intervals.
Unfortunately, conclusions depend on the perturbation model and statistical assumptions, and multiple samples add cost.

For a more extensive list of libraries see \citet{fptalks2026community}.
The same page lists many additional libraries classified as \emph{static}.
These tools perform rounding error analysis prior to the execution of the code, so they are more similar to an a priori error analysis and fall outside the scope of the current work.
Their main advantage is the possibility of proving bounds over a specified input domain before execution.

Among tools based on dynamic local-error propagation, Shaman \citep{demeure2022shaman} provides debugging features such as attribution of errors to their sources and detection of unstable branches, together with broad integration into C++ software.
Instead, our framework specifically focuses on finite element assembly through generated FEniCSx kernels and Python/NumPy, with support for half precision and both approximate worst-case bounds and signed error estimates.

\paragraph{Rounding errors in the finite element method}

Existing literature on the effects of rounding errors in FE kernels is scarce.
The first studies were conducted by \citet{melosh1969manipulationerrors,melosh1973inheritederror,utku1984solutionerrors}.
In these studies the focus is on the entire process of obtaining an FE solution, including the conditioning and factorization of the structural system.
Moreover, the problems are limited to structural systems with stiffness and mass matrices.

On the other hand, there is a larger amount of literature on rounding errors in the solution of linear systems arising from FEM \citep{babuska1981pversion,fried1986roundoff,alvarez2012roundoff,babuska2018roundoff,liu2021balancing}.
Some specific areas that are part of the FE kernel evaluation have also received attention.
For example, the stable and efficient way of interpolating polynomials using barycentric interpolation is known, see \citet{higham2004barycentric,berrut2004interpolation,laughton2022barycentric}.
Stability and interpolation accuracy, especially for high-order polynomials, are also studied in \citet{brubeck2025fiat}.

Perhaps the only reference that studies rounding errors in FE kernels, including reference basis evaluation, geometry evaluation, construction of gradient transformations, quadrature accumulation, and local-to-global assembly, is \citet{croci2024kernels}.
However, \citet{croci2024kernels} provide theoretical a priori rounding error bounds while the current work focuses on practical a posteriori rounding error estimates.

\paragraph{Main contributions}

The main contributions of this paper are as follows.
\begin{enumerate}
	\item We present the first software framework for automated a posteriori rounding error estimation of FE computations.
	\item We implement a custom C++ data type, \texttt{running\_error\_t}, and integrate it with templated kernels generated by the FEniCS Form Compiler (FFCx) and the FEniCSx assemblers.
	      The type computes rounding error estimates alongside the values in two modes: ``worst'' for running error bounds, and ``exact'' for tighter signed error estimates based on higher-precision arithmetic and error-free transformations.
	      In the Neo-Hooke benchmarks, the slowdown is at most $3.5\times$ relative to plain assembly in most tested cases, making the estimates practical for diagnosing rounding errors in FE kernels.
	\item We demonstrate that both error modes detect the numerical instability of a na\"ive Neo-Hooke strain energy evaluation in the small-deformation regime, despite the bounded condition number of the underlying problem.
	      Both modes also capture the improved accuracy obtained with a stable implementation using a series expansion.
	\item We show that the error estimates capture the growth of rounding errors when assembling a Laplace operator on a mesh with near-degenerate, needle-like triangles.
	      The estimates reflect the asymptotic scaling predicted by conditioning arguments: the ``worst'' mode captures worst-case growth, while the ``exact'' mode also captures milder growth arising from cancellations between roundoffs.
\end{enumerate}

\paragraph{Paper outline}
Section~\ref{sec:rounding-error-analysis} introduces the floating-point model and notation and derives running error bounds and alternative signed error estimates.
Section~\ref{sec:implementation} describes their implementation in automatically generated FE kernels.
Section~\ref{sec:examples} presents the Neo-Hooke and near-degenerate-mesh examples, including conditioning analyses, comparisons of the error estimates, and performance measurements.

\section{A posteriori rounding error analysis}
\label{sec:rounding-error-analysis}

We review two approaches that estimate rounding errors alongside computed values: first-order running error bounds and signed error estimates.
Both separate the error introduced by the current operation from errors propagated through its inputs.

\subsection{Standard floating-point model and notation}

In this work we follow the notation of \citet{higham2002accuracy}.
Let $\fl(x)$ be a \emph{faithful} rounding function, returning $x$ if it is exactly representable and otherwise either of the two adjacent floating-point numbers that bracket $x$.
The IEEE standard model of floating-point arithmetic, see \citet[Eq.~2.4]{higham2002accuracy}, guarantees on most modern hardware, excluding underflow and overflow, that computing an operation $x \textrm{ op } y$ incurs a relative error no worse than \emph{machine epsilon} $\epsmach$, i.e.,
\begin{equation}
	\fl(x \text{ op } y) = (x \text{ op } y)(1 + \delta), \quad |\delta| \leq \epsmach, \quad \text{op} \in \{+, -, *, /\}.
	\label{eq:std-model}
\end{equation}
The same applies to the representation of a number in the floating-point system,
\begin{equation}
	\fl(x) = x (1 + \delta), \quad |\delta| \leq \epsmach.
	\label{eq:std-model-repr}
\end{equation}
The \emph{machine epsilon} is a property of a floating-point system defined as $\epsmach \coloneqq \beta^{1-t}$, where $\beta$ is the \emph{base} and $t$ is called \emph{precision}.
The machine epsilon is related to \emph{unit roundoff} $u \coloneqq \frac{1}{2} \beta^{1-t} = \frac{1}{2} \epsmach$.
For instance, the IEEE 64-bit double precision format has $\beta = 2$ and $t = 53$, so $\epsmach^{(64)} = 2^{-52} \approx 2.22 \times 10^{-16}$.
For the IEEE-754 binary formats we will use symbols $\epsmach^{(16)} \approx 9.77 \times 10^{-4}$, $\epsmach^{(32)} \approx 1.19 \times 10^{-7}$, and $\epsmach^{(64)} \approx 2.22 \times 10^{-16}$ for the machine epsilon of the 16-, 32-, and 64-bit precision formats, respectively.

Another useful variant of the IEEE standard model is from \citet[Eq.~(2.5)]{higham2002accuracy},
\begin{equation}
	\begin{aligned}
		\fl(x \text{ op } y) = \frac{x \text{ op } y}{1 + \delta}, \quad \fl(x) = \frac{x}{1 + \delta}, \quad |\delta| \leq \epsmach,
	\end{aligned}
	\label{eq:std-model2}
\end{equation}
which is implied by the model \eqref{eq:std-model} whenever $x \text{ op } y$ lies in the range of the floating-point system.
Both models \eqref{eq:std-model} and \eqref{eq:std-model2} are useful both for a priori and a posteriori rounding error analysis.

For mathematical library functions such as sine, cosine, and logarithm, a relative error bound of $\epsmach$ is not generally guaranteed.
The accuracy depends on the math library and its implementation.

\paragraph{Notation}

We indicate with $C > 0$ a generic constant.
It may take a different value at each occurrence, and is consistent only within a single equation.
We say that $f(x)$ and $g(x)$ are asymptotically equivalent for $x \to 0$ and write $f(x) \sim C g(x)$ if $\lim_{x \to 0} f(x) / g(x) = C$.
The $f \approx g$ means that $f$ is approximately $g$, without specifying a precise sense of approximation.
If there exists $C > 0$ such that $|f(x)| \le C |g(x)|$ for all $x$ sufficiently close to zero we use the Big-O symbol $f(x) = \mathcal O(g(x))$.
Finally, a bare $\delta$ always denotes a relative rounding perturbation, as in Eq.~\eqref{eq:std-model}, while $\delta$ applied to a field, such as $\delta \bm u$ or $\delta W$, denotes its first variation.

\subsection{Running error analysis}

Running Error Analysis (REA) is a technique for computing a posteriori bounds on forward rounding errors.
Let $x$ and $y$ denote the exact inputs to an operation and $\hat x$ and $\hat y$ their floating-point approximations.
We distinguish three results:
\begin{equation*}
	\begin{aligned}
		f        & \coloneqq x \text{ op } y,                &  & \text{(exact result for exact inputs)},     \\
		\tilde f & \coloneqq \hat x \text{ op } \hat y,      &  & \text{(exact result for perturbed inputs)}, \\
		\hat f   & \coloneqq \fl(\hat x \text{ op } \hat y), &  & \text{(computed result)}.
	\end{aligned}
\end{equation*}
The forward error separates into the local rounding error introduced by the current operation and the error propagated from its inputs:
\begin{equation}
	\hat f - f = \underbrace{(\hat f - \tilde f)}_{\text{local rounding error}} + \underbrace{(\tilde f - f)}_{\text{propagated error}}.
	\label{eq:error-split}
\end{equation}

For the local contribution, Eq.~\eqref{eq:std-model2} gives $\hat f = \tilde f / (1 + \delta)$ with $|\delta| \leq \epsmach$.
Hence,
\begin{equation}
	|\hat f - \tilde f| = |\delta| \, |\hat f| \leq \epsmach |\hat f|.
	\label{eq:local-bound-rae}
\end{equation}
This bound is computable from the result of the operation.

The propagated contribution depends on the input errors.
If these satisfy
\begin{equation}
	e_x = \hat x - x, |e_x| \le \bar e_x, \quad e_y = \hat y - y, |e_y| \leq \bar e_y,
\end{equation}
we estimate the propagated error using a first-order Taylor expansion about the exact input pair $(x,y)$, with increments $(e_x,e_y)$:
\begin{equation}
	\tilde f = f + \pdv{f}{x} e_x + \pdv{f}{y} e_y + \text{h.o.t.}.
	\label{eq:rea-expansion}
\end{equation}
Therefore we obtain the estimated bound for the propagated error
\begin{equation}
	|\tilde f - f| \leq \left| \pdv{f}{x} \right| \bar e_x + \left| \pdv{f}{y} \right| \bar e_y + \text{h.o.t.}.
\end{equation}
Combining this estimate with the local bound in Eq.~\eqref{eq:local-bound-rae} and applying the triangle inequality to Eq.~\eqref{eq:error-split} gives
\begin{equation}
	|\hat f - f| \leq \epsmach |\hat f| + \left| \pdv{f}{x} \right| \bar e_x + \left| \pdv{f}{y} \right| \bar e_y + \text{h.o.t.} \qquad \text{(REA error bound)}.
	\label{eq:rea-general}
\end{equation}
This is an approximate, worst-case upper bound to first-order in $\epsmach$, $\bar e_x$ and $\bar e_y$.
The resulting error estimate is stored alongside the computed value and propagated as an input-error estimate in subsequent operations.
To obtain a computable estimate, we evaluate the derivatives at $(\hat x,\hat y)$ instead.
For a twice continuously differentiable operation, this replacement changes the propagated-error estimate only by terms of second order in the input errors, which are absorbed into the higher-order terms.
All derivatives in the subsequent error estimates are evaluated at $(\hat x,\hat y)$.

\begin{table}[ht]
	\centering
	\renewcommand{\arraystretch}{1.5}
	\begin{tabular}{>{\columncolor{tabgray}}l||l}
		\rowcolor{tabgray}
		Operation              & Running error bound                                                                        \\ \hline\hline
		$\fl(\hat x + \hat y)$ & $\epsmach |\hat f| + \bar e_x + \bar e_y + \text{h.o.t.}$                                  \\ \hline
		$\fl(\hat x \hat y)$   & $\epsmach |\hat f| + |\hat y| \bar e_x + |\hat x| \bar e_y + \text{h.o.t.}$                \\ \hline
		$\fl(\hat x / \hat y)$ & $\epsmach |\hat f| + (|\hat y| \bar e_x + |\hat x| \bar e_y) / |\hat y|^2 + \text{h.o.t.}$ \\
	\end{tabular}
	\vspace{1em}
	\caption{Running error bounds for a few elementary operations.}
	\label{tab:bounds-elem}
\end{table}

Running error bounds for a few elementary operations are included in Tab.~\ref{tab:bounds-elem}.
They can be found in \citet[Tab.~1]{zahradnicky2010running}.

\subsection{Signed error estimates}

Taking absolute values in the running bound prevents cancellation between error contributions.
Signed error estimates retain this information.
The following discussion reviews their construction using first-order propagation, higher-precision evaluation, and error-free transformations.
Computing local rounding errors exactly and propagating input errors through a first-order expansion is related to the pair arithmetic developed in \citet{lange2020faithfully}.

Let $f_\text{ref} \approx \tilde f = \hat x \text{ op } \hat y$ be a reference value computed in a sufficiently high-precision or arbitrary-precision floating-point format.
Retaining the signs of the propagated errors and combining the first-order expansion in Eq.~\eqref{eq:rea-expansion} with this reference value gives the signed error estimate
\begin{equation}
	\hat f - f \approx \underbrace{(\hat f - f_\text{ref})}_{\text{estimated local rounding error}} + \underbrace{\pdv{f}{x} e_x +\pdv{f}{y} e_y}_{\text{propagated error}} \qquad \text{(Signed error estimate)}.
	\label{eq:rea-higher-precision}
\end{equation}

Another option is to use error-free transformations and compute the local rounding error $\hat f - \tilde f$ exactly.
An error-free transformation computes the floating-point result $s = \fl(x \text{ op } y)$ and a residual $r$ such that $x \text{ op } y = s + r$.
Applied to the perturbed inputs, it gives $\tilde f = \hat f + r$, so the local forward error is $\hat f - \tilde f = -r$.
For the operation of addition, \texttt{TwoSum} or \texttt{FastTwoSum} \citep{dekker1971,ogita2005} are the most commonly used error-free transformations.
An error-free transformation for the multiplication could be achieved with \texttt{TwoProductFMA} \citep[Alg.~3.5]{ogita2005}, if the fused multiply-add (FMA) instruction is available, or with \texttt{TwoProduct} at the cost of more floating-point operations \citep[Alg.~3.3]{ogita2005}.

The error estimates based on Eq.~\eqref{eq:rea-higher-precision} are signed error estimates, unlike the nonnegative bound in Eq.~\eqref{eq:rea-general}.
While neither of them can be used as a certified bound, as, e.g., in interval arithmetic \citep{moore2009interval}, a large value of the estimated error might suggest that
\begin{enumerate*}[label=\arabic*)]
	\item the problem we are dealing with is ill-conditioned, or
	\item the numerical computation is unstable.
\end{enumerate*}

We remark that while the models Eq.~\eqref{eq:std-model} and Eq.~\eqref{eq:std-model2} provide accurate upper bounds for rounding errors, the best-case lower bound is always zero since floating-point computations can be exact for specific input values, see \citet[\S 4.2]{muller2018handbook}.
Automatically computing more informative lower bounds is, in general, nontrivial.

In this work, we implement this signed-error approach alongside running error bounds in a common arithmetic type and integrate both modes into automatically generated FEniCSx kernels and finite element assembly.
This integration enables their accuracy and computational cost to be compared within the same assembly workflow.

\section{Implementation for automatically generated FE kernels}
\label{sec:implementation}

The implementation of the running error bounds for the numerical experiments in this paper was done in the REA library \citep{habera2026rea} in combination with the dolfiny package \citep{zilian2026dolfiny}, which is based on the FEniCSx finite element ecosystem \citep{baratta2025dolfinx}.
FEniCSx assembles finite element forms representing weak formulations of PDEs into scalars, vectors, or matrices.
The domain-specific language in which the PDE is defined is the Unified Form Language (UFL), see \citet{alnaes2014ufl}.
FFCx \citep{fenics2026ffcx} translates the symbolic, high-level PDE description into a lower-level programming language.

\paragraph{Custom backend}

Firstly, we have developed a custom backend for the form compiler, see \citet{kuhner2026ffcxbackends}, which allows us to generate templated C++ code.
The templated code is necessary for us in order to compile the kernel with a custom data type that overloads arithmetic operations to track the running error.
The signature of a generated FFCx kernel is shown in Fig.~\ref{fig:ffcx-kernel}.

\begin{figure}[ht]
	\begin{minted}{cpp}
template <typename T, typename U>
void tabulate_tensor(T* __restrict A,
                     const T* __restrict w,
                     const T* __restrict c,
                     const U* __restrict coordinate_dofs,
                     const std::int32_t* __restrict entity_local_index,
                     const std::uint8_t* __restrict quadrature_permutation);
\end{minted}
	\caption{Signature of a generated FFCx kernel.}
	\label{fig:ffcx-kernel}
\end{figure}

The body of the kernel follows the typical structure described in \citet{baratta2025dolfinx}.
It produces arrays of basis functions evaluated at quadrature points, evaluates the geometry information (Jacobian determinant and other transformations), builds the evaluation tree for the expressions representing the integrands, and finally loops over quadrature points and free indices of the accumulation tensor \texttt{A}, which is the only output of the kernel.

\paragraph{Custom C++ type}

Secondly, a custom C++ type called \texttt{running\_error\_t<T, Mode>} implements various running error bounds discussed above.
This type is provided by the REA library, which is distributed as a single header file and requires C++23 \citet{habera2026rea}.
\footnote{We require C++23 for the use of fixed-width, floating-point types: \texttt{std::float16\_t}, \texttt{std::float32\_t} and \texttt{std::float64\_t}.}
Its data layout is shown in Fig.~\ref{fig:re_t-layout}.
The first $N$ bits store the value of the number, followed by the error (nonnegative bound or signed estimate) stored in the subsequent $N$ bits.
The total size of the structure is $2N$ bits.

\begin{figure}[ht]
	\centering
	\begin{minipage}[c]{0.58\textwidth}
		\begin{minted}{cpp}
template <typename T, ErrorMode Mode = ErrorMode::WORST>
struct running_error_t {
	T val;
	T err;
};
        \end{minted}
	\end{minipage}
	\hfill
	\begin{minipage}[c]{0.38\textwidth}
		\centering
		\begin{tikzpicture}[x=1.28cm, y=0.55cm]
			\draw[fill=blue!45] (0,0) rectangle (2,1);
			\draw[fill=blue!20] (2,0) rectangle (4,1);

			\node[font=\scriptsize\ttfamily] at (1,0.5) {T val};
			\node[font=\scriptsize\ttfamily] at (3,0.5) {T err};

			\node[font=\scriptsize] at (1,1.45) {$N$ bits};
			\node[font=\scriptsize] at (3,1.45) {$N$ bits};

			\draw[<->] (0,1.15) -- (2,1.15);
			\draw[<->] (2,1.15) -- (4,1.15);

			\node[font=\scriptsize] at (2,-0.45) {$2N$ bits};
			\draw[<->] (0,-0.15) -- (4,-0.15);
		\end{tikzpicture}
	\end{minipage}

	\caption{Layout of the custom running error type, \texttt{running\_error\_t}.
		The memory layout for \texttt{running\_error\_t<T>} with an $N$-bit type \texttt{T} is shown on the right.
	}
	\label{fig:re_t-layout}
\end{figure}

The optional \texttt{ErrorMode} takes values:
\begin{itemize}
	\item \texttt{WORST} for the REA bounds from Eq.~\eqref{eq:rea-general}.
	      We will refer to this mode as ``worst''.
	\item \texttt{EXACT} for the higher precision error estimate from Eq.~\eqref{eq:rea-higher-precision}.
	      This mode will be referred to as ``exact''.
\end{itemize}

In the ``exact'' mode, we use error-free transformations for addition and subtraction (\texttt{TwoSum}) and multiplication (\texttt{TwoProductFMA}).
For division, we use FMA-based error estimation, following \texttt{CPairDiv} in \citet{lange2020faithfully}.
Although the FMA residual can be computed exactly under appropriate assumptions, the quotient correction obtained from it is subject to rounding.
The current implementation does not fully account for subnormal values and underflow in multiplication and division, which can invalidate the exactness of the computed residuals.
Suitable scaling and squeezing techniques could mitigate these range limitations.

For mathematical functions such as sine, cosine, and logarithm, the ``worst'' mode uses $\epsmach$ as the assumed local relative error bound, although the accuracy of the math library may not satisfy this assumption.
In the ``exact'' mode, we compute a reference value $f_\text{ref}$ in a wider floating-point format.
We use the format of twice the width of \texttt{T}, if available.
For example, on many \texttt{aarch64} platforms, the wider formats for 16-, 32-, and 64-bit formats are available in the form of 32-, 64-, and 128-bit IEEE-754 formats, see \citet[Sec.~10.1.1, Tab.~3]{aapcs64}.
However, on many x86-64 platforms there is only the extended double precision \texttt{long double} format with 80-bit width and 64-bit significand precision.

All basic arithmetic operators for \texttt{running\_error\_t} are overloaded to follow the semantics of the respective error modes.
For example, in Fig.~\ref{fig:cpp-overload} the \texttt{operator*} between two running error numbers is overloaded to compute the value in the precision of type \texttt{T}, and to compute the error bound for the ``worst'' mode, according to Eq.~\eqref{eq:rea-general}.

\begin{figure}[ht]
	\begin{minted}{cpp}
  running_error_t operator*(const running_error_t& other) const {
    const T new_val = val * other.val;
    return running_error_t{
        new_val,                                                            // value
        abs(other.val) * err + abs(val) * other.err + eps * abs(new_val)};  // error
  }
\end{minted}
	\caption{Example overloading of the multiplication operator between two running error numbers for the ``worst'' mode.}
	\label{fig:cpp-overload}
\end{figure}

Operator overloading for the local-error propagation is not a novel technique, and has been used in the CENA prototype \citep{hovland2021forwardcena}, and the Shaman library \citep{demeure2022shaman}, while the concepts were already presented in \citet{zahradnicky2010running}.
We could have also used any other publicly available software implementation reviewed in Section~\ref{sec:introduction}.
Instead, we have decided to implement the custom \texttt{running\_error\_t} because:
\begin{enumerate*}[label=\arabic*)]
	\item the implementation is simple, with roughly 1000 lines of C++23 header-file code,
	\item we use different error modes to compare the accuracy of different estimates,
	\item we require full control over the binary layout of the type, in order to interface C++ and Python/NumPy,
	\item we require support for the 16-bit floating-point type \texttt{std::float16\_t}.
\end{enumerate*}

FEniCSx is implemented in C++, but its Python interface is generated using nanobind \citep{nanobind}, so we require a C++ type that is compatible with both the constraints on the template type in FEniCSx and the conversion between nanobind and NumPy arrays.
Nanobind's \texttt{ndarray} is based on the DLPack array exchange protocol, which unfortunately causes it to be more restrictive.
There is no support for conversion of a structured C++ type \texttt{running\_error\_t} into a structured NumPy array.
We bypass this limitation by exporting the \texttt{running\_error\_t} as a NumPy scalar type of the same width (e.g., \texttt{complex128} for \texttt{running\_error\_t<std::float64\_t>}, or \texttt{float32} for \texttt{running\_error\_t<std::float16\_t>}), and providing a view of the value and the error.

Overall, the implementation can be summarized in the following steps:
\begin{enumerate}
	\item FFCx and ffcx-backends generate a templated C++ FE kernel matching the signature in Fig.~\ref{fig:ffcx-kernel},
	\item dolfiny compiles the kernel with \texttt{running\_error\_t<T, Mode>} from rea, using CppJIT's Just-In-Time compilation \citep{cppjit},
	\item dolfiny compiles the dolfinx assembly loops and other FE constructs for \texttt{running\_error\_t<T, Mode>},
	\item dolfinx assembles into scalars, vectors, or matrices of the type \texttt{running\_error\_t<T, Mode>}.
\end{enumerate}
By the end of the process we have two assembled quantities.
For a vector assembly, this returns a vector of values assembled in precision \texttt{T} and a vector of error bounds/estimates.
Both returned vectors have the same dimensions.

\section{Examples}
\label{sec:examples}

We propose two examples aimed to demonstrate the accuracy and performance of the running error bounds.
First, we consider the evaluation of the hyperelastic Neo-Hooke energy.
We show how the developed framework detects numerical instabilities in its na\"ive implementation.

Second, we consider the assembly of a Laplace operator on a near-degenerate mesh.
Such a problem is ill-conditioned \citep{croci2024kernels} and our running error estimates correctly capture this property.

\subsection{Hyperelastic energy of the Neo-Hooke model}

The effects of rounding errors in the evaluation of finite-strain hyperelastic models were recently studied by \citet{shakeri2024stabilenumerics}.
We illustrate the problem on a version of the Neo-Hooke hyperelastic energy, namely the variant $W_a$ in \citet[Eq.~2.11]{pence2014neohookean}.
Under a 2D plane-strain kinematic simplification the strain energy density is given by
\begin{equation}
	W = \frac{\mu}{2} (I_1 - 2 - 2 \log J) + \frac{\lambda}{2}(J - 1)^2 \qquad \text{(unstable)},
	\label{eq:energy-unstable}
\end{equation}
where $\mu$ is the shear modulus, $\lambda$ is the first Lam\'{e} parameter of the underlying 3D model, and the trace of the right Cauchy--Green tensor $\bm C \coloneqq \bm F^\trp \bm F \in \mathbb R^{2 \times 2}$ is denoted $I_1 \coloneqq \tr(\bm C)$.
The deformation gradient $\bm F \coloneqq \bm I + \nabla \bm u \in \mathbb R^{2 \times 2}$ is computed from the displacement $\bm u$, and the volumetric deformation is measured by $J \coloneqq \det(\bm F)$.
In the above, both tensors $\bm C$ and $\bm F$ are two-dimensional, since they were reduced under the plane-strain assumption.
We followed the plane-strain reduction for simpler visualization below, but all arguments and observations carry over to the original 3D problem.

The key observations in \citet{shakeri2024stabilenumerics} are that the small increment in the deformation gradient $\bm F$ loses $k$ digits of accuracy for displacement gradients of order $\|\nabla \bm u\| \approx 10^{-k}$, since the displacement gradient is added to the identity tensor.
The constant contribution of the identity is later subtracted, either in $J - 1$ or in $\log J$, but the lost digits that represent the effective strain energy contribution cannot be recovered.
This is important in particular for small deformations, and it is a typical example of catastrophic cancellation.
The authors present a solution based on the reformulation using the Green--Lagrange strain tensor $\bm E \coloneqq \frac{1}{2} (\bm C - \bm I)$, which has the property
\begin{equation}
	\bm E(\bm u) = \frac{1}{2} (\nabla \bm u + \nabla \bm u^\trp) + \mathcal O(\|\nabla \bm u\|^2),
\end{equation}
i.e., it has no constant contributions for small deformations.
In addition to that, specialized implementations of the function $\log(1 + x)$ using C99 \texttt{log1p} must be used to preserve the numerical stability.

An alternative approach is presented in \citet{habera2026automated}.
It is valid only in the regime of small deformations, since it is based on the series expansion of the strain energy in a small dimensionless parameter that measures the magnitude of the deformation, proportional to $\|\nabla \bm u\|$.
The strain energy density expansion reads
\begin{equation}
	W_\text{stable} = W^{(2)} + W^{(3)} + \mathcal O(\|\nabla \bm u\|^4) \qquad \text{(stable)},
	\label{eq:energy-expansion}
\end{equation}
with the second- and third-order contributions in $\|\nabla \bm u\|$
\begin{equation}
	\begin{aligned}
		W^{(2)} & \coloneqq \mu \tr(\bm E_1^2) + \frac{\lambda}{2} \tr(\bm E_1)^2, \\ W^{(3)} & \coloneqq \mu \Big( 2 \bm E_1 \colon \bm E_2 - \frac{4}{3} \tr(\bm E_1^3) \Big) + \lambda \Big(\tr(\bm E_1) \tr(\bm E_2) + \frac{1}{2} \tr(\bm E_1)^3 - \tr(\bm E_1) \tr(\bm E_1^2) \Big),
	\end{aligned}
\end{equation}
where the split of the Green--Lagrange strain tensor is $\bm E = \bm E_1 + \bm E_2$ with linear $\bm E_1 \coloneqq \frac{1}{2} (\nabla \bm u + \nabla \bm u^\trp)$ and non-linear $\bm E_2 \coloneqq \frac{1}{2} \nabla \bm u^\trp \nabla \bm u$ strain contributions.

\paragraph{Problem setup}
To demonstrate the catastrophic cancellation in the energy expression in Eq.~\eqref{eq:energy-unstable}, and the behaviour of our running error bounds, we assemble two linear forms
\begin{equation}
	\begin{aligned}
		L(v; \bm u)               \coloneqq \int_\Omega W(\bm u) |\mathcal K|^{-1} v \, \mathrm dx, \qquad
		L_\text{stable}(v; \bm u) \coloneqq \int_\Omega W_\text{stable}(\bm u) |\mathcal K|^{-1} v \, \mathrm dx,
	\end{aligned}
\end{equation}
for test functions $v \in W_h$, where $W_h$ is a space of cell-wise constant functions.
The inverse cell volume $|\mathcal K|^{-1}$ can be understood as an inverse Riesz map, so the assembled vector has the meaning of a cell-averaged strain energy density function.
The forms $L$ and $L_\text{stable}$ are assembled for a provided displacement field $\bm u \in U_h$, where $U_h$ is a vector-valued continuous Lagrange space of degree $k = 1$.
We denote the assembled vectors as $b_i \coloneqq L(\varphi_i; \bm u)$ where $\varphi_i$ is the $i$-th Lagrange basis function.
The displacement field $\bm u$ is computed in a pre-processing step as a solution to a cantilever deformation problem shown in Fig.~\ref{fig:cantilever-neo-hooke}, using a simpler St.~Venant--Kirchhoff material law, and a prescribed traction load $t_y$.
The input errors associated with the displacement degrees of freedom are initialized randomly at the scale of machine precision, using a fixed seed.

\begin{figure}[ht]
	\centering
	\begin{tikzpicture}[scale=1.3, >=stealth]
		\draw[thick] (0,0) rectangle (5,1);

		\foreach \y in {0,0.1,...,1.1}
		\draw (0,\y) -- (-0.12,\y-0.1);

		\foreach \y in {1.0,0.8,0.6,0.4,0.2}
		\draw[->,thick] (5.12,\y) -- (5.12,\y-0.18);
		\node at (5.12,1.2) {$t_y$};

		\draw[<->] (0,-0.3) -- (5,-0.3)
		node[midway,below] {$w = 0.5\,\mathrm{m}$};

		\draw[<->] (5.55,0) -- (5.55,1)
		node[midway,right] {$h = 0.1\,\mathrm{m}$};

		\draw[->,thick] (0,0) -- (0.8,0)
		node[below right] {$x$};
		\draw[->,thick] (0,0) -- (0,0.7)
		node[above left] {$y$};
	\end{tikzpicture}
	\caption{Cantilever deformation problem for the Neo-Hooke cancellation example.}
	\label{fig:cantilever-neo-hooke}
\end{figure}

Material parameters were chosen to be representative of steel, with shear modulus $\mu \approx \SI{77}{\giga\pascal}$ and first Lam\'{e} parameter $\lambda \approx \SI{115}{\giga\pascal}$, which follows from a Young's modulus of \SI{200}{\giga\pascal} and Poisson's ratio $\nu = 0.3$.
The applied traction is set to $t_y = \alpha \cdot \si{\mega\pascal}$.

\paragraph{Spatial distribution of the error}
In this first example, we assemble the two linear forms into vectors using the 32-bit single precision floating-point format, i.e., we use \texttt{running\_error\_t<std::float32\_t>}, while the mesh geometry, and geometric quantities are stored and computed in 64-bit double precision numbers.
The load scale is set to $\alpha = 1$, which results in the deformation scale $\|\nabla \bm u\|$ varying between $10^{-5}$ and $10^{-3}$.

\begin{figure}[ht]
	\centering
	\begin{subfigure}{0.48\textwidth}
		\centering
		\includegraphics[width=\textwidth]{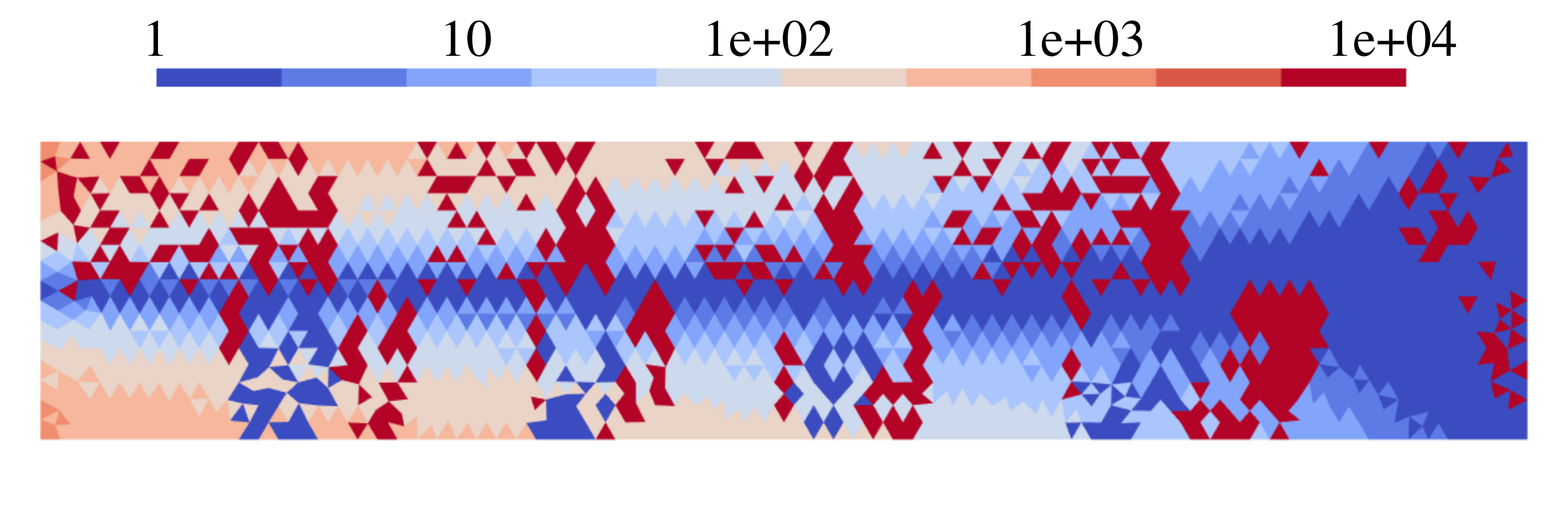}
		\caption{Assembled unstable strain energy density $W$ (in $\si{\pascal}$).}
	\end{subfigure}\hfill
	\begin{subfigure}{0.48\textwidth}
		\centering
		\includegraphics[width=\textwidth]{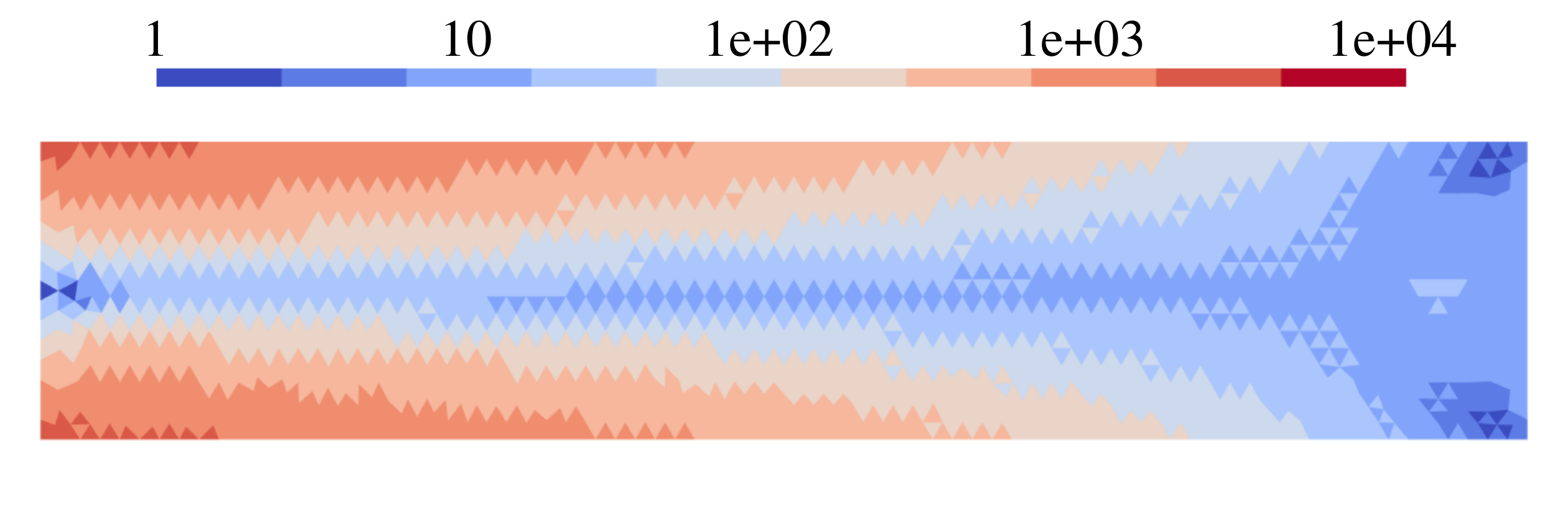}
		\caption{Assembled stable strain energy density $W_\text{stable}$ (in $\si{\pascal}$).}
	\end{subfigure}
	\caption{Comparison of the unstable and stable expressions for the strain energy density.
		Figures use a logarithmic colormap.
	}
	\label{fig:neo-hooke-energies}
\end{figure}

Results for the two energy densities $W$ and $W_\text{stable}$ are shown in Fig.~\ref{fig:neo-hooke-energies}.
The unstable strain energy density from Eq.~\eqref{eq:energy-unstable} shows cells with a complete loss of accuracy.
Additionally, we observe cells with negative strain energy densities.
On the other hand, the more stable series expansion in Eq.~\eqref{eq:energy-expansion} leads to milder rounding errors throughout the mesh.

\begin{figure}[ht]
	\centering
	\begin{subfigure}[b]{0.48\textwidth}
		\centering
		\includegraphics[width=\textwidth]{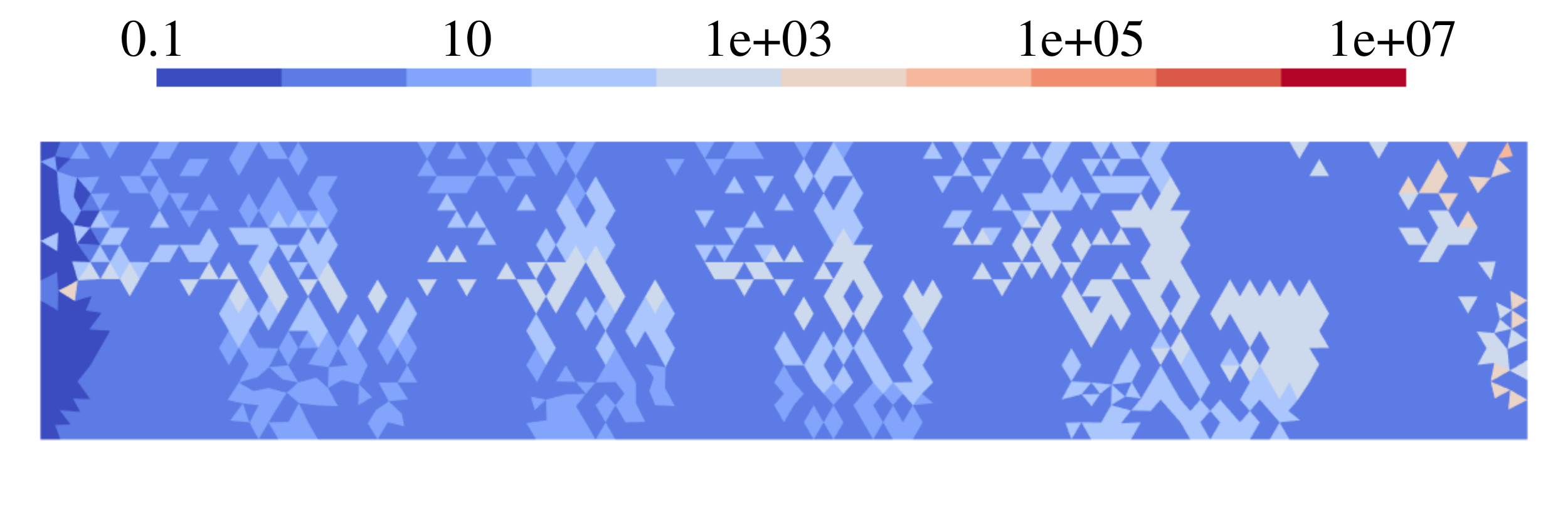}
		\caption{Rel. error estimate for $W$ and ``exact'' mode.}
	\end{subfigure}
	\hfill
	\begin{subfigure}[b]{0.48\textwidth}
		\centering
		\includegraphics[width=\textwidth]{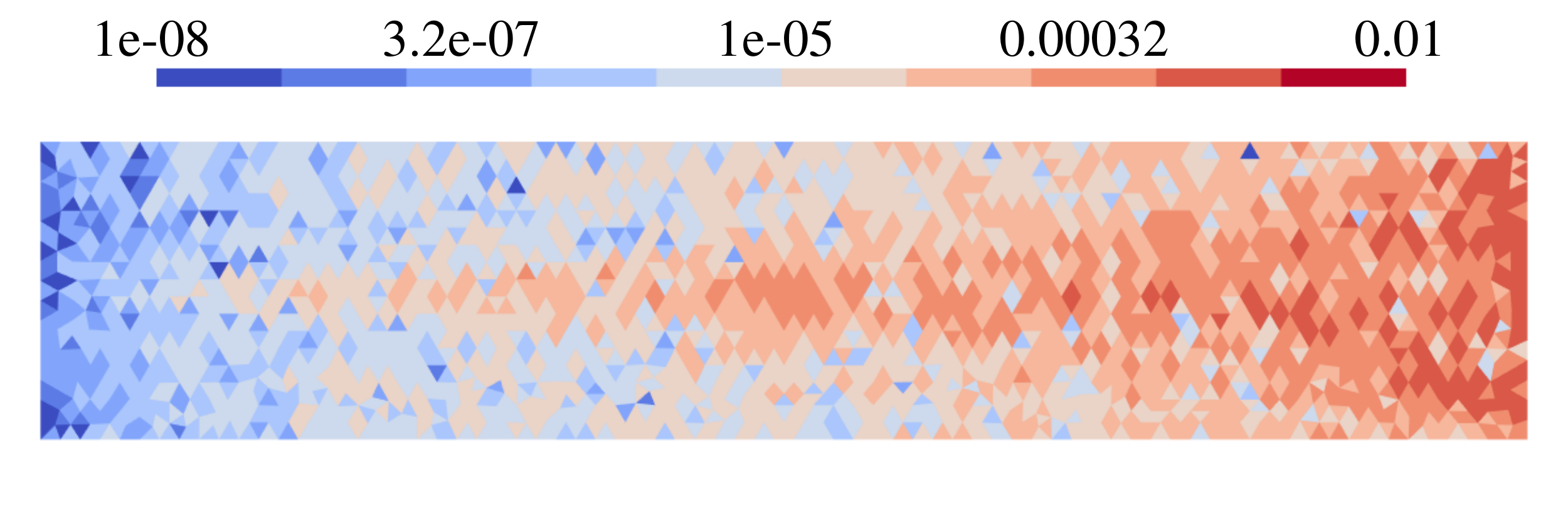}
		\caption{Rel. error estimate for $W_\text{stable}$ and ``exact'' mode.}
	\end{subfigure}

	\vspace{2ex}

	\begin{subfigure}[b]{0.48\textwidth}
		\centering
		\includegraphics[width=\textwidth]{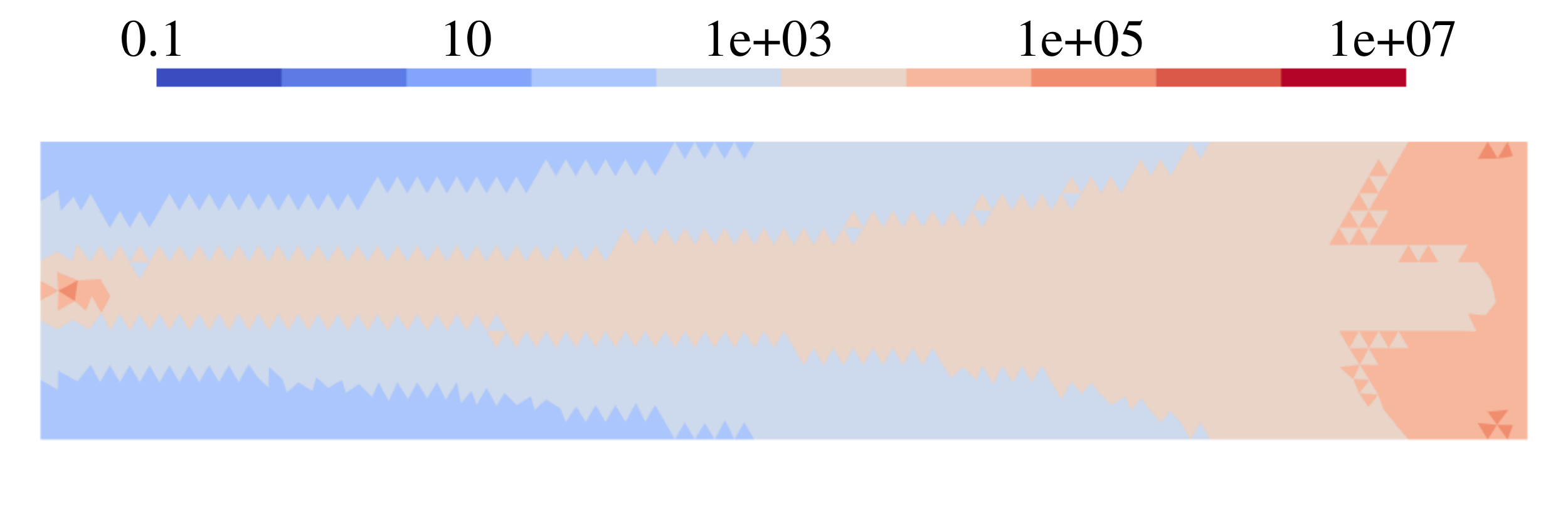}
		\caption{Rel. error bound for $W$ and ``worst'' mode.}
	\end{subfigure}
	\hfill
	\begin{subfigure}[b]{0.48\textwidth}
		\centering
		\includegraphics[width=\textwidth]{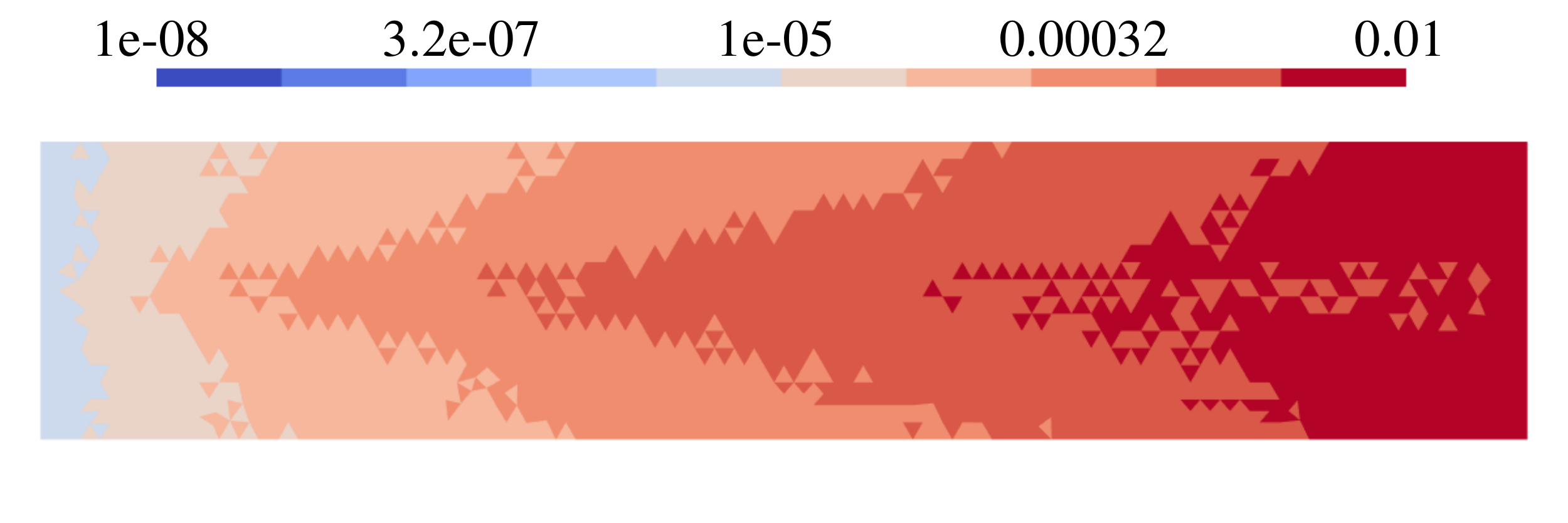}
		\caption{Rel. error bound for $W_\text{stable}$ and ``worst'' mode.}
	\end{subfigure}

	\caption{Comparison of different error estimation modes for stable and unstable Neo-Hooke strain energy density expressions.
		Figures use a logarithmic colormap.
	}
	\label{fig:neo-hooke-errors}
\end{figure}

The error estimates derived for the ``exact'' and ``worst'' modes are shown in Fig.~\ref{fig:neo-hooke-errors}.
We compute the relative rounding error estimate of the strain energy density evaluation, i.e.,
\begin{equation}
	\eta_i = \frac{|e_i|}{|b_i^\text{ref}|},
\end{equation}
where $e_i$ is the error estimate provided by the developed framework, and $b_i^\text{ref}$ is the double precision evaluation of the stable expression.
For the ``worst'' mode the estimate is a nonnegative approximate bound, $|\fl(b_i) - b_i| \le e_i + \text{h.o.t.}$, while for the ``exact'' mode the estimate has a sign, $\fl(b_i) - b_i \approx e_i$.
The left column represents the unstable expression.
The ``exact'' mode produces the relative error estimates as large as $10^{5}$, while their smallest value is not below $10^{-1}$.
These errors are large, given that we would expect an \emph{accurate} algorithm to achieve relative errors of the order of the 32-bit single precision machine epsilon $\epsmach^{(32)} \approx 1.19 \times 10^{-7}$.

The situation is much improved for the stable strain energy expression based on the expansion, shown in the right column of Fig.~\ref{fig:neo-hooke-errors}.
Indeed, the ``exact'' mode estimates relative errors in the range between $10^{-8}$ and $10^{-3}$, which is satisfactory.

\paragraph{Dependence of the error on the deformation scale}

As a second experiment, we alter the scale of the load $\alpha \in [10^{-4}, 10^{1/2}]$ in the pre-processing step.
To characterize the deformation scale, we compute a dimensionless quantity
\begin{equation}
	\Pi \coloneqq \frac{\|\bm u\|_\infty}{w} \sim C \|\nabla \bm u\|,
	\label{eq:deformation-scale}
\end{equation}
where $\|\bm u\|_\infty$ is the maximum Euclidean norm of $\bm u$ over the whole domain $\Omega$.
The last equivalence holds in the sense of the limit $\alpha \to 0$, since we are interested in the small deformation regime, and the pre-processing solution $\bm u$ has no rigid body modes.

\begin{figure}[ht]
	\centering
	\begin{tikzpicture}
		\begin{loglogaxis}[
				width=9cm, height=7cm,
				xlabel={deformation scale $\Pi \sim C\|\nabla \bm u\|$},
				ylabel={max. rel. rounding error $\eta^\text{cell}$},
				ymin=1e-9, ymax=1e14,
				grid=both,
				major grid style={dotted, gray},
				minor grid style={dotted, gray!40},
				legend cell align=left,
				legend style={
						font=\small,
						at={(1.02,0.5)},
						anchor=west,
					},
				mark size=2pt,
				only marks,
			]
			\addplot[plotblue, mark=square*]  table[x=Pi_3, y=unstable_exact] {experiments/neo_hooke_max_error_vs_load.dat};
			\addlegendentry{\texttt{std::float32\_t} unstable ``exact''}
			\addplot[plotblue, mark=square]  table[x=Pi_3, y=unstable_worst] {experiments/neo_hooke_max_error_vs_load.dat};
			\addlegendentry{\texttt{std::float32\_t} unstable ``worst''}

			\addplot[plotorange, mark=triangle*]  table[x=Pi_3, y=expansion_exact] {experiments/neo_hooke_max_error_vs_load.dat};
			\addlegendentry{\texttt{std::float32\_t} stable ``exact''}
			\addplot[plotorange, mark=triangle]  table[x=Pi_3, y=expansion_worst] {experiments/neo_hooke_max_error_vs_load.dat};
			\addlegendentry{\texttt{std::float32\_t} stable ``worst''}

			\draw[plotorange, dashed, line width=1pt] (axis cs:1e-8,1.19e-7) -- (axis cs:1e-2,1.19e-7);
			\node[plotorange, font=\scriptsize, anchor=south west] at (axis cs:1e-7,1.19e-7) {$\epsmach^{(32)}$};

			\draw[plotblue, solid, line width=1pt] (axis cs:1e-8,1e13)
			-- node[pos=0.2, below, font=\scriptsize, inner sep=6pt] {$C \Pi^{-2}$}
			(axis cs:1e-2,1e1);

		\end{loglogaxis}
	\end{tikzpicture}
	\caption{Maximum relative rounding error of the Neo-Hooke strain energy density $W$ as a function of the deformation scale $\Pi \sim C\|\nabla \bm u\|$.
		The stable, series-based expansion has for a small deformation a constant error approximation, while the unstable expression shows rounding error scaling as $\Pi^{-2}$.
		The dashed horizontal line marks the machine epsilon $\epsmach^{(32)}$ of single precision.
	}
	\label{fig:neo-hooke-err-vs-scale}
\end{figure}

For a very small deformation, $\|\nabla \bm u\| \ll 1$, we expect the rounding errors in the computation of $\bm F = \bm I + \nabla \bm u$ to grow relative to the value of the strain energy density.
This is shown in Fig.~\ref{fig:neo-hooke-err-vs-scale}, where we plot the maximum relative error estimate $\eta^\text{cell} \coloneqq \max_i \eta_i$, with the maximum taken over all cells in the mesh.

Figure~\ref{fig:neo-hooke-err-vs-scale} confirms the earlier observation that the unstable expression has relative rounding errors dependent on the scale of the deformation, while the stable expression retains a flat behaviour.
It is measured in both modes, ``worst'' and ``exact''.
For the stable expression, both have a nearly flat curve independent of the deformation scale.
It suggests we are dealing with a problem that has a bounded condition number.

\paragraph{Conditioning}

The Neo-Hooke example is informative because it shows a problem with a bounded condition number that is implemented by a numerically unstable algorithm.
Let us consider the problem of the evaluation of the strain energy density function $W(\bm u)$ of the Neo-Hooke material as a function of the displacement field $\bm u$.
The relative condition number is usually defined as (see \citet[Eq.~(12.1)]{trefethen1997})
\begin{equation}
	\kappa_{W} \coloneqq \sup_{\delta \bm u \ne 0} \frac{|\delta W|}{|W|} \Big/ \frac{\|\delta \bm u\|}{\|\bm u\|} = \sup_{\delta \bm u \ne 0} \frac{|\delta W|}{\|\delta \bm u\|} \frac{\|\bm u\|}{|W|},
\end{equation}
where $\delta \bm u$ is an arbitrary perturbation direction and $\delta W \coloneqq DW(\bm u)[\delta \bm u] = \frac{\mathrm d}{\mathrm dt}
	W(\bm u + t \delta \bm u)\rvert_{t=0}$ is the directional derivative of $W$.
The relative condition number measures the ratio of the relative change of the perturbed output to the relative change of the perturbed input.
If the relative condition number is moderate, e.g., $\kappa_W < 100$, we say that the problem is \emph{well-conditioned}.
The choice of the norm $\|\cdot\|$ in the definition above depends on the context.
Here, we consider the condition number of the evaluation of $W$ with respect to the (finite-dimensional) coefficients of the FE function $\bm u$.

\begin{lemma}[Condition number of the strain energy density evaluation]
	\label{lem:cond-W}
	Let $\mu > 0$ and $\lambda \ge 0$, and let $\bm u \in U_h$ be a continuous, piecewise linear displacement field that satisfies the cell-wise non-rigid body modes (non-RBMs) assumption, i.e.,
	\begin{equation}
		\|\bm u\|_{\infty,\mathcal K} \le C_R \| \sym(\nabla \bm u) \|, \qquad \text{(non-RBMs assumption)}
	\end{equation}
	for a fixed $C_R > 0$, independent of the deformation scale, and each cell $\mathcal K$ in the triangulation of the domain $\Omega$, where $\|\bm u\|_{\infty,\mathcal K} \coloneqq \max_{y \in \mathcal{K}} \|\bm u(y)\|$ is the maximum Euclidean norm on the cell $\mathcal K$.
	All matrix norms are Frobenius norms.
	Then, on each fixed cell $\mathcal K$, there exists $\rho > 0$ such that the relative condition number $\kappa_W$ of the evaluation of the Neo-Hooke strain energy density $W(\bm u)$ in Eq.~\eqref{eq:energy-unstable} is bounded whenever $0 < \|\nabla \bm u\| \le \rho$, i.e.,
	\begin{equation}
		\kappa_{W} = \sup_{\delta \bm u \ne 0} \frac{|\delta W|}{|W|} \Big/ \frac{\|\delta \bm u\|_{\infty,\mathcal K}}{\|\bm u\|_{\infty,\mathcal K}} \leq C C_R h_{\mathcal K}^{-1} \eqqcolon C_W.
	\end{equation}
	The supremum is over perturbations in the local finite element space $[\mathcal P_1(\mathcal K)]^d$.
	The constants $C_W$ and $\rho$ depend on the material parameters $\mu$ and $\lambda$, dimension $d$, the non-RBMs constant $C_R$, and the cell $\mathcal K$ through its diameter $h_{\mathcal K}$ and its shape.
	In particular, the bound is independent of the deformation scale $\|\nabla \bm u\|$.
\end{lemma}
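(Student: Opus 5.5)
On a fixed cell $\mathcal K$ the field $\bm u$ is affine, so $\bm H \coloneqq \nabla\bm u$ is constant and $W$ is a function of $\bm H$ alone. The plan is to bound the numerator $|\delta W|$ from above and the denominator $|W|$ from below, both in terms of $\|\sym \bm H\|$, and then use the non-RBMs assumption to relate $\|\bm u\|_{\infty,\mathcal K}$ to $\|\sym\bm H\|$. The key ingredient is the small-strain expansion of Eq.~\eqref{eq:energy-expansion}. It gives
\begin{equation*}
W(\bm u) = W^{(2)} + \mathcal O(\|\bm H\|^3), \qquad W^{(2)} = \mu \|\bm E_1\|^2 + \tfrac{\lambda}{2}\tr(\bm E_1)^2 \ge \mu\|\sym\bm H\|^2 ,
\end{equation*}
since $\bm E_1 = \sym\bm H$, $\mu>0$ and $\lambda\ge 0$. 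Similarly, differentiating $W$ in $\bm H$ at $\bm I+\bm H$ gives
\begin{equation*}
\delta W = DW(\bm H)[\delta\bm H] = 2\mu\, \bm E_1 \colon \sym\delta\bm H + \lambda \tr(\bm E_1)\tr(\delta \bm H) + \mathcal O(\|\bm H\|^2\|\delta\bm H\|),
\end{equation*}
so $|\delta W| \le C(\|\sym\bm H\| + \|\bm H\|^2)\|\delta\bm H\|$.

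The first step is to control the cubic remainders by $\|\sym\bm H\|$. On a single cell, $\bm u$ affine and the non-RBMs assumption give
\begin{equation*}
\|\bm H\| \le C h_{\mathcal K}^{-1}\|\bm u\|_{\infty,\mathcal K} \le C C_R h_{\mathcal K}^{-1}\|\sym\bm H\| .
\end{equation*}
The first inequality is an inverse estimate for $\mathcal P_1$, whose constant depends on the cell shape. Hence $\|\bm H\|$ and $\|\sym\bm H\|$ are equivalent on the cell, with constants depending only on $C_R$ and $\mathcal K$. Consequently $|\mathcal O(\|\bm H\|^3)| \le C'\|\bm H\|\,\|\sym\bm H\|^2$. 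Choosing $\rho$ small enough that $C'\rho\le\mu/2$ yields $W \ge \tfrac{\mu}{2}\|\sym\bm H\|^2>0$ whenever $0<\|\bm H\|\le\rho$. Note that $\|\bm H\|>0$ forces $\sym\bm H\neq 0$ by the same equivalence. I expect this step to be the main obstacle. Without non-RBMs, a pure rotation-like $\bm H$ with $\sym\bm H$ tiny makes $W$ of order $\|\bm H\|^4$ while $\delta W$ is of order $\|\bm H\|$, and the lower bound on $|W|$ fails. The remainders of $W$ and $DW$ must also be bounded uniformly on the ball $\|\bm H\|\le\rho$, which I would obtain from smoothness of $\log\det$ near $\bm I$, with $\rho<1/2$ keeping $J$ away from $0$.

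The second step bounds the numerator. Using the equivalence again, $|\delta W| \le C\|\sym\bm H\|\,\|\delta\bm H\|$. For the perturbation, again by the $\mathcal P_1$ inverse estimate, $\|\delta\bm H\| \le C h_{\mathcal K}^{-1}\|\delta\bm u\|_{\infty,\mathcal K}$.

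The final step combines everything:
\begin{equation*}
\kappa_W \le \frac{C\|\sym\bm H\|\,h_{\mathcal K}^{-1}\|\delta\bm u\|_{\infty,\mathcal K}}{\tfrac{\mu}{2}\|\sym\bm H\|^2}\cdot\frac{C_R\|\sym\bm H\|}{\|\delta\bm u\|_{\infty,\mathcal K}} = C C_R h_{\mathcal K}^{-1}.
\end{equation*}
The powers of $\|\sym\bm H\|$ cancel, so the bound is independent of the deformation scale. The constants depend only on $\mu,\lambda,d,C_R$ and on the diameter and shape of $\mathcal K$ through the inverse estimate, as claimed. (The $C_R$ dependence entering the remainder via the norm equivalence is absorbed into $C$ and $\rho$.)
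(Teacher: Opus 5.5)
Your proposal is correct and follows essentially the same route as the paper's proof. The inverse estimate combined with the non-RBMs assumption gives $\|\bm H\| \le C_I C_R h_{\mathcal K}^{-1}\|\sym\bm H\|$, and this is used to absorb the higher-order remainders in two places: in the upper bound $|\delta W| \le C\|\sym\bm H\|\,\|\nabla\delta\bm u\|$ and in the lower bound $W \ge c\|\sym\bm H\|^2$ obtained from the quadratic energy expansion. The inverse estimate and the non-RBMs assumption then close the argument, with the powers of $\|\sym\bm H\|$ cancelling.

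The only difference is cosmetic. The paper computes $\delta W = \mu(\bm F - \bm F^{-\trp}) \colon \nabla\delta\bm u + \lambda (J-1) J \bm F^{-\trp} \colon \nabla\delta\bm u$ exactly and then bounds the coefficient factors. You instead Taylor-expand $DW$ about $\bm H = 0$.
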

The proof is given in Appendix~\ref{app:cond-W}.

Lemma~\ref{lem:cond-W} establishes that, on a fixed mesh, the relative condition number of the strain energy density evaluation remains bounded as the deformation scale decreases, under the stated assumptions.
Consequently, a backward stable evaluation with relative input perturbations of order $\epsmach$ has relative forward error of order $C_W \epsmach$.
The lemma is stated cell-wise, and taking $C_W$ as the maximum over the cells gives a uniform condition-number bound on a fixed mesh.
The lemma asserts independence of the deformation scale rather than a small constant.
For the cantilever of Fig.~\ref{fig:cantilever-neo-hooke} the cells near the loaded end displace much more than they strain, and therefore carry a large $C_R$.
The numerical experiments in Fig.~\ref{fig:neo-hooke-err-vs-scale}, where the stable expression has an error estimate independent of the deformation scale, are consistent with this prediction.
The unstable expression scales as $\|\nabla \bm u\|^{-2}$, which could be intuitively explained: intermediate quantities such as $I_1$ and $J$ remain of order one as the deformation decreases, and their evaluation can introduce absolute rounding errors of order $\epsmach$.
These errors need not cancel when the intermediate quantities are combined to obtain the energy.
Since the exact energy decreases quadratically with the deformation scale, an absolute error of order $\epsmach$ produces relative errors of order $\epsmach \|\nabla \bm u\|^{-2}$.

\paragraph{Performance}

Performance of the rounding error estimates is essential.
In fact, if it were not for the performance considerations, one could always argue for computing a reference value in much higher precision and evaluating the exact rounding error.
As we discussed in the literature overview, the use of high-precision floating-point types is expensive.
This is the case for emulated precision, e.g., when computing the rounding error for a double precision value, but also when using a double precision value as a reference value for single precision computation.
For example, NVIDIA specifies the Rubin GPU at 4 Pflop/s for dense \texttt{float16} operations, which is a factor of 121 above the throughput of 33 Tflop/s for scalar \texttt{float64} operations, see \citet{nvidiarubin2026}.
This argument supports the use of running error bounds, since there is a larger performance margin for additional error computations.
\footnote{
	However, none of the tested FE kernels exploit the dense matrix-matrix multiplication to saturate the peak throughput, and the kernels are memory-bandwidth bound.
	Even for high-degree polynomial kernels the generated code contains mostly scalar instructions, and scalar arithmetic throughput depends on the processor, operation, and precision, so the quoted peak throughput ratio does not directly predict the benefit of reduced precision in these kernels.
}

\begin{table}[ht]
	\centering
	\renewcommand{\arraystretch}{1.5}
	\small
	\setlength{\tabcolsep}{4pt}
	\input{experiments/neo_hooke_performance_table.tex}
	\vspace{1em}
	\caption{Assembly times in milliseconds for the unstable and stable Neo-Hooke strain energy expressions, for the plain \texttt{std::float64\_t} kernel and the ``exact''/``worst'' error estimation modes.
		The ``type'' column gives the floating-point type the assembly is performed in.
		The plain kernel is always \texttt{std::float64\_t} and is shared by both rows of a group.
		The slowdown relative to the plain kernel is given in brackets.
		Values are reported as mean $\pm$ standard deviation over the 90 fastest of 100 repeated runs.
	}
	\label{tab:neo-hooke-slowdown}
\end{table}

The measured cost of the error estimation is summarized in Tab.~\ref{tab:neo-hooke-slowdown}.
Benchmarks were executed on a MacBook Pro 2024 with an Apple M4 CPU and 16 GB of memory, in an \texttt{aarch64} (ARMv8-A) Linux container running Ubuntu 26.04.1 LTS with glibc 2.43, GCC 15.2.0 and Clang 21.1.8.
The CPU provides hardware 16-bit precision arithmetic and FMA, but no hardware 128-bit format, so the 128-bit reference format of the ``exact'' mode is emulated in software by the libgcc runtime.
The ``worst'' mode is inexpensive, with a slowdown of up to $3.5\times$ over the plain \texttt{std::float64\_t} assembly.
We measure the same slowdown also for the ``exact'' mode, with the exception of three setups shown in red.
These are the runs with \texttt{std::float64\_t} and the unstable expression.
This is due to the calls to the logarithm function (only present in the unstable variant, the expansion does not have any transcendental functions) for the 128-bit reference format, which does not have an optimized implementation.
This results in a slowdown of up to $22\times$.

All slowdowns vary only mildly with the number of cells, i.e., the error estimation adds an approximately constant per-cell overhead and does not change the linear scaling of the assembly.

\subsection{Laplace operator on a near-degenerate mesh}

In this example, we focus on rounding errors introduced in the computation of geometric quantities.
Let us consider the evaluation of the weak discrete Laplace operator on the space $V_h$ of continuous Lagrange functions of degree $k=1$ on a triangulation $\mathcal T_h$ of a domain $\Omega$:
\begin{equation}
	a(u, v) \coloneqq \int_\Omega \nabla u \cdot \nabla v \, \mathrm dx,
\end{equation}
which is a bilinear form $a(\cdot, \cdot): V_h \times V_h \longrightarrow \mathbb R$.
\begin{figure}[ht]
	\centering
	\begin{tikzpicture}[scale=4, line cap=round, line join=round, >=stealth]
		\def\d{0.08}

		\coordinate (p00) at (0.00,0.00);
		\coordinate (p10) at (0.25,0.00);
		\coordinate (p20) at (0.50,0.00);
		\coordinate (p30) at (0.75,0.00);
		\coordinate (p40) at (1.00,0.00);

		\coordinate (p01) at (0.00,0.25);
		\coordinate (p11) at (0.25,0.25);
		\coordinate (p21) at (0.50,0.25);
		\coordinate (p31) at (0.75,0.25);
		\coordinate (p41) at (1.00,0.25);

		\coordinate (p02) at (0.00,0.50);
		\coordinate (p12) at (0.25,0.50);
		\coordinate (p22) at ({0.75-\d},0.50); 
		\coordinate (p32) at (0.75,0.50);
		\coordinate (p42) at (1.00,0.50);

		\coordinate (p03) at (0.00,0.75);
		\coordinate (p13) at (0.25,0.75);
		\coordinate (p23) at (0.50,0.75);
		\coordinate (p33) at (0.75,0.75);
		\coordinate (p43) at (1.00,0.75);

		\coordinate (p04) at (0.00,1.00);
		\coordinate (p14) at (0.25,1.00);
		\coordinate (p24) at (0.50,1.00);
		\coordinate (p34) at (0.75,1.00);
		\coordinate (p44) at (1.00,1.00);

		\draw (p00)--(p10)--(p20)--(p30)--(p40);
		\draw (p01)--(p11)--(p21)--(p31)--(p41);
		\draw (p02)--(p12)--(p22)--(p32)--(p42);
		\draw (p03)--(p13)--(p23)--(p33)--(p43);
		\draw (p04)--(p14)--(p24)--(p34)--(p44);

		\draw (p00)--(p01)--(p02)--(p03)--(p04);
		\draw (p10)--(p11)--(p12)--(p13)--(p14);
		\draw (p20)--(p21)--(p22)--(p23)--(p24);
		\draw (p30)--(p31)--(p32)--(p33)--(p34);
		\draw (p40)--(p41)--(p42)--(p43)--(p44);

		\draw (p00)--(p11);
		\draw (p10)--(p21);
		\draw (p20)--(p31);
		\draw (p30)--(p41);

		\draw (p01)--(p12);
		\draw (p11)--(p22);
		\draw (p21)--(p32);
		\draw (p31)--(p42);

		\draw (p02)--(p13);
		\draw (p12)--(p23);
		\draw (p22)--(p33);
		\draw (p32)--(p43);

		\draw (p03)--(p14);
		\draw (p13)--(p24);
		\draw (p23)--(p34);
		\draw (p33)--(p44);

		\fill (p21) circle (0.4pt);
		\fill (p32) circle (0.4pt);
		\fill (p22) circle (0.4pt);

		\node[font=\small, below right=1pt] at (p21) {$\bm x_0$};
		\node[font=\small, above right=1pt] at (p32) {$\bm x_1$};
		\node[font=\small, above left=1pt]  at (p22) {$\bm x_2$};

		\draw[<->]
		($(p22)+(0,-0.035)$) -- node[below=2pt] {$\Delta$} ($(p32)+(0,-0.035)$);

		\draw[->,thick] (p00) -- (0.18,0)
		node[below right] {$x$};
		\draw[->,thick] (p00) -- (0,0.18)
		node[above left] {$y$};

	\end{tikzpicture}
	\caption{Square triangular mesh with near-degenerate cells, and the distance to the degenerate mesh in unscaled coordinates denoted $\Delta$.
		Vertices of the near-degenerate cell $\mathcal K$ are $\bm x_0$, $\bm x_1$, and $\bm x_2$.
	}
	\label{fig:fan-mesh-needle}
\end{figure}

We assemble the bilinear form $a(u, v)$ on the mesh shown in Fig.~\ref{fig:fan-mesh-needle}.
The mesh is produced by a right-diagonal split of a uniform 4-by-4 square discretization of a unit square.
For the purpose of avoiding exactly representable coordinates, we scale the whole mesh by a factor of $\sqrt{2}$.
We displace the center node in the $x$ direction so that its distance to the nearest node in the unscaled mesh is $\Delta \ll 1$.
\footnote{After scaling by $\sqrt{2}$, the physical node separation is $\sqrt{2} \Delta$.}
The global matrix $\bm K \in \mathbb R^{25 \times 25}$ for a linear Lagrange discretization is
\begin{equation}
	K_{ij} \coloneqq \int_\Omega \nabla \varphi_i \cdot \nabla \varphi_j \, \mathrm dx,
\end{equation}
where $\varphi_i$ and $\varphi_j$ are the linear Lagrange basis functions.
Methods developed in this work provide the approximate error bound $|\fl(K_{ij}) - K_{ij}| \le e_{ij} + \text{h.o.t.}$ for the ``worst'' mode, or a signed error estimate $\fl(K_{ij}) - K_{ij} \approx e_{ij}$, for each element of the global matrix.
In order to visualize relative errors in the assembly process, we compute
\begin{equation}
	\text{max. rel. row-wise error }
	\eta_i^\text{row} \coloneqq \max_j \frac{|e_{ij}|}{\sqrt{K_{ii}
			K_{jj}}}.
\end{equation}
The denominator $\sqrt{K_{ii} K_{jj}}$ represents an energy-like normalization, and for the Lagrange basis $K_{ii} = \int_\Omega |\nabla \varphi_i|^2 \, \mathrm dx > 0$.
For the value of the reference $K_{ij}$ we use the matrix assembled in double precision.
The quantity $\eta_i^\text{row}$ measures the row-wise maximum energy-normalized relative error estimate.
Taking the maximum over the $i$-th row allows us to visualize the assembled value and its error bounds in a mesh plot, which is done in Fig.~\ref{fig:laplace-max}.
We can see, similarly to the previous example, that the ``exact'' error mode in Fig.~\ref{fig:laplace-max-err-exact} is much tighter than the error bound in Fig.~\ref{fig:laplace-max-err-worst}, which is based on the ``worst'' mode.

\begin{figure}[ht]
	\centering
	\begin{subfigure}[t]{0.32\textwidth}
		\centering
		\includegraphics[width=\textwidth]{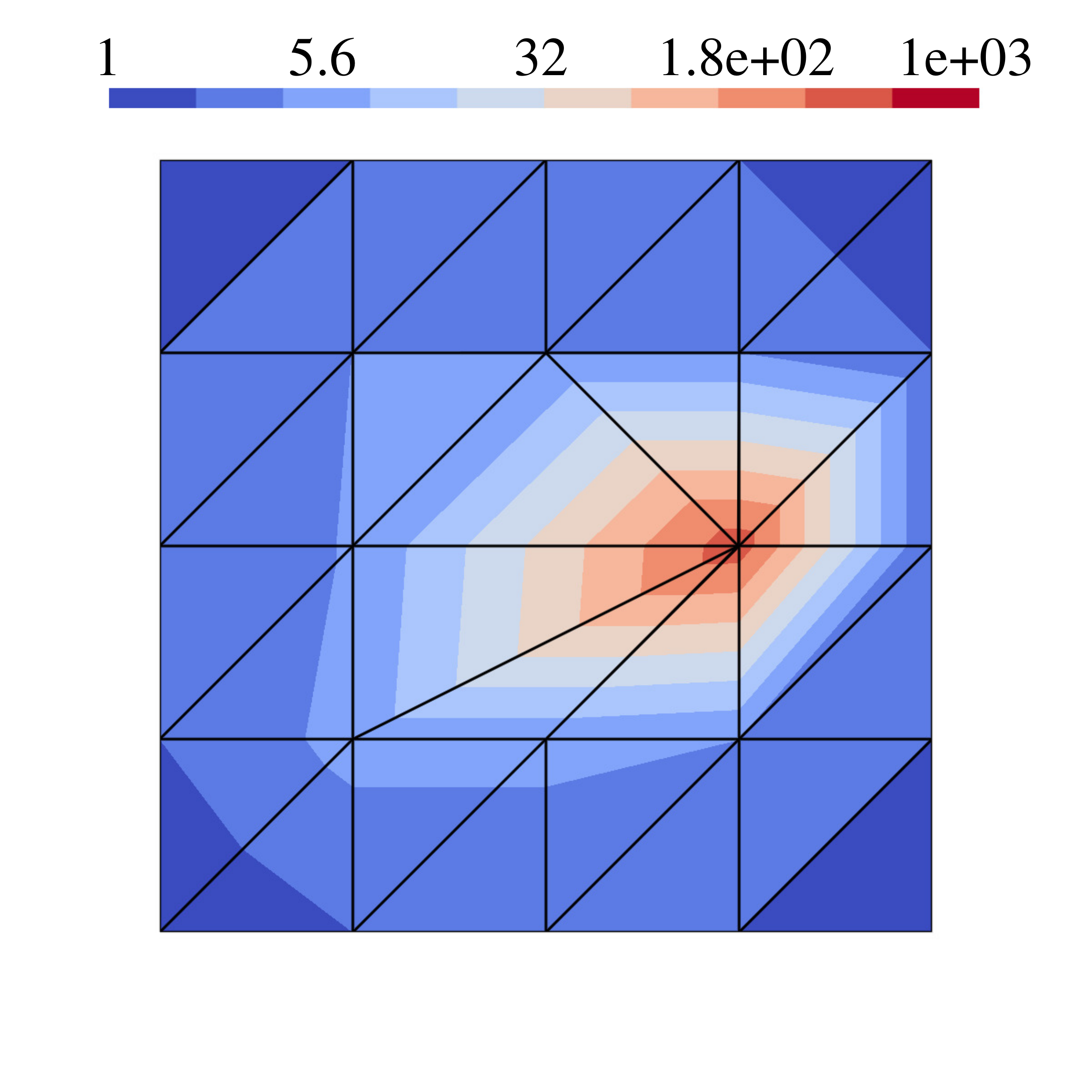}
		\caption{Max. matrix entry in the $i$-th row $\max_j |K_{ij}|$.}
		\label{fig:laplace-max-value}
	\end{subfigure}
	\hfill
	\begin{subfigure}[t]{0.32\textwidth}
		\centering
		\includegraphics[width=\textwidth]{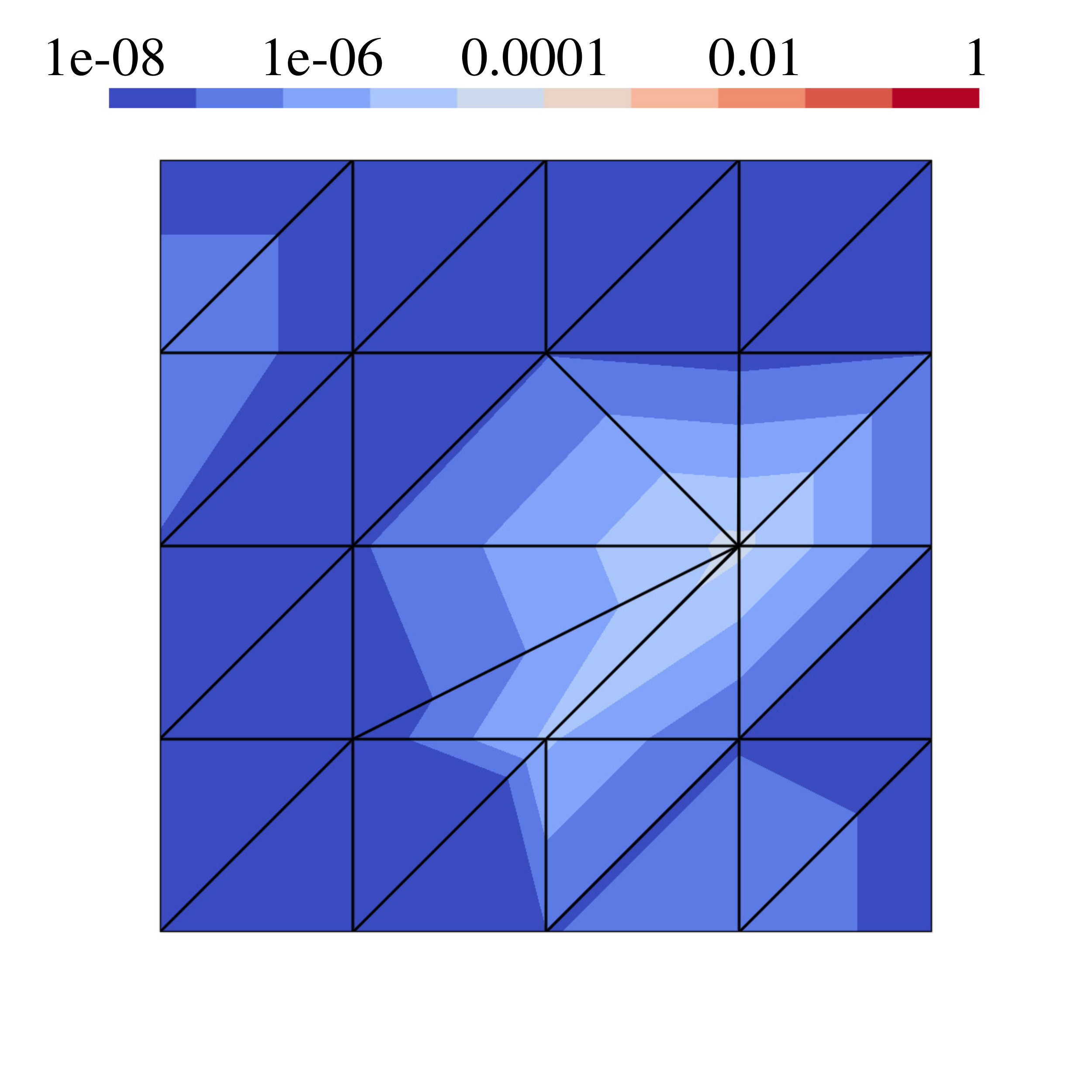}
		\caption{Max. rel. error estimate $\eta_i^\text{row}$, ``exact'' mode.}
		\label{fig:laplace-max-err-exact}
	\end{subfigure}
	\hfill
	\begin{subfigure}[t]{0.32\textwidth}
		\centering
		\includegraphics[width=\textwidth]{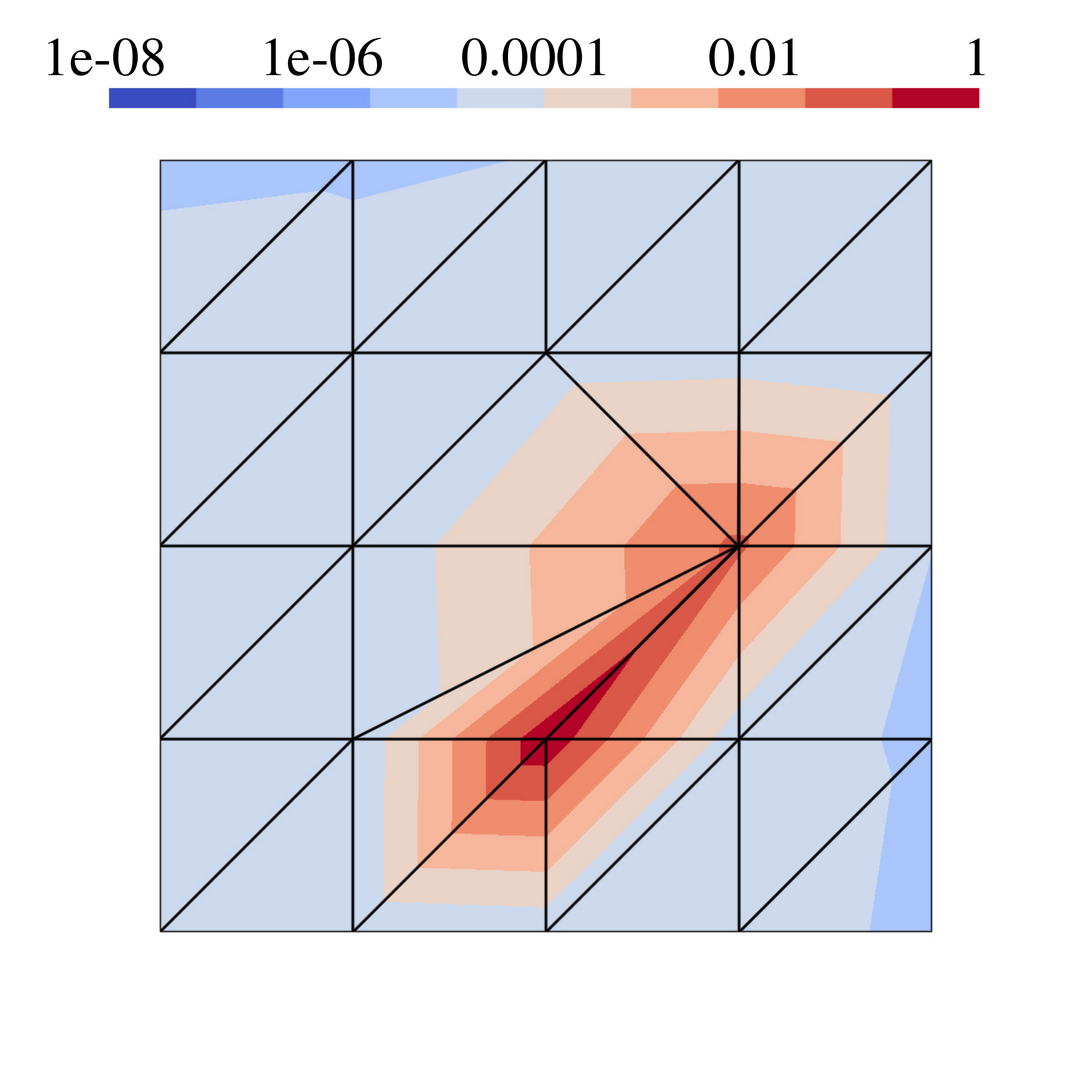}
		\caption{Max. rel. error bound $\eta_i^\text{row}$, ``worst'' mode.}
		\label{fig:laplace-max-err-worst}
	\end{subfigure}
	\caption{Row-wise maximum entries of the Laplace operator $K_{ij}$, and the maximum row-wise relative error bound/estimate, for the ``exact'' and ``worst'' modes and \texttt{std::float32\_t} single precision.
		The experiment was run for $\Delta = 10^{-3}$, so the degenerate triangle is not visible in the mesh.
		Figures use a logarithmic colormap.
	}
	\label{fig:laplace-max}
\end{figure}

\begin{figure}[ht]
	\centering
	\begin{tikzpicture}
		\begin{loglogaxis}[
				width=9cm, height=7cm,
				xlabel={distance $\Delta$},
				ylabel={max. rel. rounding error $\eta^\text{max}$},
				ymin=1e-17, ymax=1e3,
				grid=both,
				major grid style={dotted, gray},
				minor grid style={dotted, gray!40},
				legend cell align=left,
				legend style={
						font=\small,
						at={(1.02,0.5)},
						anchor=west,
					},
				mark size=2pt,
				only marks,
			]
			\addplot[plotblue, mark=square*]  table[x=delta, y=fp16_exact] {experiments/max_laplace_err_vs_delta.dat};
			\addlegendentry{\texttt{std::float16\_t} ``exact''}
			\addplot[plotblue, mark=square]  table[x=delta, y=fp16_worst] {experiments/max_laplace_err_vs_delta.dat};
			\addlegendentry{\texttt{std::float16\_t} ``worst''}

			\addplot[plotorange, mark=triangle*]  table[x=delta, y=fp32_exact] {experiments/max_laplace_err_vs_delta.dat};
			\addlegendentry{\texttt{std::float32\_t} ``exact''}
			\addplot[plotorange, mark=triangle]  table[x=delta, y=fp32_worst] {experiments/max_laplace_err_vs_delta.dat};
			\addlegendentry{\texttt{std::float32\_t} ``worst''}

			\addplot[plotgreen, mark=*]  table[x=delta, y=fp64_exact] {experiments/max_laplace_err_vs_delta.dat};
			\addlegendentry{\texttt{std::float64\_t} ``exact''}
			\addplot[plotgreen, mark=o]  table[x=delta, y=fp64_worst] {experiments/max_laplace_err_vs_delta.dat};
			\addlegendentry{\texttt{std::float64\_t} ``worst''}
			\draw[plotblue, dashed, line width=1pt] (axis cs:1e-5,9.77e-4) -- (axis cs:1,9.77e-4);
			\node[plotblue, font=\scriptsize, anchor=south west] at (axis cs:2e-5,9.77e-4) {$\epsmach^{(16)}$};

			\draw[plotorange, dashed, line width=1pt] (axis cs:1e-5,1.19e-7) -- (axis cs:1,1.19e-7);
			\node[plotorange, font=\scriptsize, anchor=south west] at (axis cs:2e-5,1.19e-7) {$\epsmach^{(32)}$};

			\draw[plotgreen, dashed, line width=1pt] (axis cs:1e-5,2.22e-16) -- (axis cs:1,2.22e-16);
			\node[plotgreen, font=\scriptsize, anchor=south west] at (axis cs:2e-5,2.22e-16) {$\epsmach^{(64)}$};

			\draw[plotgreen, solid, line width=1pt] (axis cs:1e-5,1e-12)
			-- node[pos=0.15, above, font=\scriptsize, inner sep=5pt] {$C \Delta^{-1}$}
			(axis cs:1,1e-17);
		\end{loglogaxis}
	\end{tikzpicture}
	\caption{Maximum relative rounding error of the assembled Laplace operator as a function of the distance $\Delta$, for the \texttt{std::float16\_t}, \texttt{std::float32\_t}, and \texttt{std::float64\_t} precisions and the ``exact''/``worst'' error estimation modes.
		Dashed horizontal lines mark the machine epsilon $\epsmach$ of each precision.
	}
	\label{fig:laplace-err-vs-delta}
\end{figure}

Additionally, we measure the maximum error relative to the maximum matrix entry,
\begin{equation}
	\eta^\text{max} \coloneqq \frac{\max_{ij} |e_{ij}|}{\max_{ij} |K_{ij}|},
\end{equation}
and plot it over varying distances $\Delta$.
This is shown in Fig.~\ref{fig:laplace-err-vs-delta} for three different kernel precisions \texttt{std::float16\_t}, \texttt{std::float32\_t}, and \texttt{std::float64\_t}.
The ``exact'' error mode shows rounding errors behaving as $\Delta^{-1}$ for all three precisions, with the error being of the order of machine epsilon for distances $\Delta > 0.1$.
The ``worst'' mode is more pessimistic as expected, but more importantly it keeps the same asymptotic behaviour.
There are data-points excluded for half-precision \texttt{std::float16\_t}, for $\Delta < 5 \times 10^{-3}$, due to underflow or overflow.

\paragraph{Conditioning}

Numerical experiments from Fig.~\ref{fig:laplace-err-vs-delta} suggest that the problem of assembling the global matrix $K_{ij}(\bm x)$ from given mesh coordinates $\bm x \in \mathbb R^{25 \times 2}$ is not well-conditioned in the limit of $\Delta \to 0$, since the relative forward error grows as $\Delta^{-1}$ even in the ``exact'' mode.
This result agrees with the theoretical bounds derived in \citet{croci2024kernels}.
Theorem~4.5 in \citet{croci2024kernels} shows that the upper bound on the relative rounding error in the evaluation of the Laplace geometry tensor is proportional to the condition number of the Jacobian matrix $\bm J \coloneqq \pdv{\bm x}{\bm \xi} \in \mathbb R^{2 \times 2}$ that maps from the standard reference triangle with vertices $\bm \xi_0 = (0, 0)^\trp$, $\bm \xi_1 = (1, 0)^\trp$, and $\bm \xi_2 = (0, 1)^\trp$.
Indeed, for the needle triangle from Fig.~\ref{fig:fan-mesh-needle} we have
\begin{align}
	\bm J = \sqrt{2} \left[
		\begin{array}{c|c}
			1/4 & 1/4 - \Delta \\ \hline
			1/4 & 1/4
		\end{array}
		\right], \quad
	\kappa_2(\bm J) \sim C \Delta^{-1}
\end{align}
which matches the measured relative error in Fig.~\ref{fig:laplace-err-vs-delta}.

\section{Conclusion}
\label{sec:conclusion}

This study investigates the rounding errors in the assembly of finite element kernels.
We rely on a posteriori techniques that compute the error estimate alongside the value, such as running error analysis.
We have implemented a custom C++ data type \texttt{running\_error\_t} which tracks the rounding error in two modes: ``worst'' for the running error bounds and ``exact'' for a tighter signed error estimate based on a higher-precision floating-point format and error-free transformations.
The \texttt{running\_error\_t} is passed to the C++ kernel that is generated by the FFCx library, and linked into the FEniCS assemblers.

We tested the framework on two examples.
The na\"ive implementation of the strain energy density function of a Neo-Hooke hyperelastic material is expected to be numerically unstable in the regime of small deformations.
However, the underlying problem has a bounded condition number.
Both ``worst'' and ``exact'' error modes capture that.
In addition, a solution based on a series expansion provides improved accuracy, and both error modes suggest it.
The performance of both modes is promising.
In most cases they show a slowdown of up to $3.5\times$ compared to the plain assembly.
This makes the proposed methods usable in practice with small overhead.

The second example is the assembly of a Laplace operator on a mesh that includes near-degenerate, needle-like triangles.
We show that our error bounds estimate that rounding errors are growing with decreasing distance to the degenerate mesh.
Both the ``worst'' and the ``exact'' error modes match the asymptotic scaling predicted by the conditioning arguments.
The ``worst'' mode captures the worst case and ignores error cancellations, while the ``exact'' mode successfully captures smaller rounding error estimates arising from stochastic behaviour of roundoffs.

There are several limitations to the present work.
The code generated by FFCx does not give fine control over the floating-point type.
It distinguishes between the working type into which the scalars, vectors, or matrices are assembled, and the type for the geometry evaluation.
For example, it is not possible to decide to perform evaluation of the geometric matrix using matrix multiply-accumulate with low-precision input, while accumulating into a high-precision result.
In addition, one cannot choose to evaluate some parts of the bilinear form using high-precision computation (e.g., a non-linear material law), and others using low-precision numbers (e.g., a tabulation of basis functions).

Future work could focus on more robust handling of floating-point range effects in low-precision computations, such as overflow in the evaluation of geometry expressions, through suitable scaling and squeezing techniques.
Another direction is to choose the precision of each operation dynamically based on the available running error estimate.

\section*{Acknowledgement}

Matteo Croci has received support from the grants PID2023-146668OA-I00, RYC2022-036312-I, and CEX2021-001142-S, funded by MICIU/AEI/10.13039/501100011033 and cofunded by ESF+ and the EU.
Matteo Croci is also supported by the Basque Government through the BERC 2026-2029 program.
Paul T.~Kühner and Andreas Zilian have received partial funding from the European Union and the European Defence Agency through project IMPACT I.

\appendix
\section{Proof of Lemma~\ref{lem:cond-W}}
\label{app:cond-W}

\begin{proof}
	Perturbations of the trace $I_1$ and determinant $J$ follow
	\begin{equation}
		\begin{aligned}
			\delta I_1 & = \tr(\nabla \delta \bm u^\trp \bm F) + \tr(\bm F^\trp \nabla \delta \bm u) = 2 \tr(\bm F^\trp \nabla \delta \bm u), \\
			\delta J   & = J \tr(\bm F^{-1} \nabla \delta \bm u),
		\end{aligned}
	\end{equation}
	and we can compute the perturbation in the strain energy density in Eq.~\eqref{eq:energy-unstable} as
	\begin{equation}
		\begin{aligned}
			\delta W & = \frac{\mu}{2} \Big( \delta I_1 - \frac{2}{J} \delta J \Big) + \lambda (J - 1) \delta J                                                            \\
			         & = \mu \Big( \tr(\bm F^\trp \nabla \delta \bm u) - \tr(\bm F^{-1} \nabla \delta \bm u) \Big) + \lambda (J - 1) J \tr(\bm F^{-1} \nabla \delta \bm u) \\
			         & = \mu \Big( (\bm F - \bm F^{-\trp}) \colon \nabla \delta \bm u \Big) + \lambda (J - 1) J \bm F^{-\trp} \colon \nabla \delta \bm u
		\end{aligned}
	\end{equation}
	with an upper bound
	\begin{equation}
		|\delta W| \le \mu \| \bm F - \bm F^{-\trp} \| \| \nabla \delta \bm u \| + \lambda |J - 1| |J| \| \bm F^{-\trp} \| \| \nabla \delta \bm u \|.
	\end{equation}
	On a fixed cell $\mathcal K$, the finite element inverse inequality gives $\|\nabla \bm v\| \le C_I h_{\mathcal K}^{-1} \|\bm v\|_{\infty,\mathcal K}$ for every $\bm v \in [\mathcal P_1(\mathcal K)]^d$, see \citet{brenner2008fem}.
	Setting $\bm H \coloneqq \nabla \bm u$, $\bm S \coloneqq \sym(\bm H)$, and $M \coloneqq C_I C_R h_{\mathcal K}^{-1}$, the non-RBMs assumption therefore gives
	\begin{equation}
		\|\bm H\| \le C_I h_{\mathcal K}^{-1} \|\bm u\|_{\infty,\mathcal K} \le M \|\bm S\|.
	\end{equation}
	We can continue with bounds
	\begin{equation}
		\|\bm F - \bm F^{-\trp} \| = \| \bm I + \nabla \bm u  - (\bm I + \nabla \bm u )^{-\trp}\| = \| \nabla \bm u + \nabla \bm u^\trp + \mathcal O(\|\nabla \bm u\|^2) \| \le C \|\sym(\nabla \bm u)\| + \mathcal O(\|\nabla \bm u\|^2),
	\end{equation}
	and
	\begin{equation}
		\begin{aligned}
			|J - 1|           & = |1 + \tr(\nabla \bm u) + \mathcal O(\|\nabla \bm u\|^2) - 1| \le C \| \sym(\nabla \bm u) \| + \mathcal O(\|\nabla \bm u\|^2), \\
			|J|               & \le 1 + \varepsilon,                                                                                                            \\
			\|\bm F^{-\trp}\| & \le \sqrt{2} + \varepsilon,
		\end{aligned}
	\end{equation}
	for sufficiently small $\|\nabla \bm u\|$ and $\varepsilon \ll 1$, since we are considering the limiting case $\|\nabla \bm u\| \to 0$.
	Since $\|\bm H\|^2 \le M\|\bm H\|\|\bm S\|$, the remainder terms can be absorbed into $C\|\bm S\|$ for sufficiently small $\|\bm H\|$.
	For the condition number we then have
	\begin{equation}
		\kappa_W \le C \|\bm S\| \frac{\|\bm u\|_{\infty,\mathcal K}}{|W|} \sup_{\delta \bm u \ne 0} \frac{\|\nabla \delta \bm u\|}{\|\delta \bm u\|_{\infty,\mathcal K}}.
		\label{eq:neo-hooke-cond-bound}
	\end{equation}
	The quadratic energy expansion gives
	\begin{equation}
		W = \mu\|\bm S\|^2 + \frac{\lambda}{2}(\tr\bm S)^2 + \mathcal O(\|\bm H\|^3).
	\end{equation}
	Since $\|\bm H\|^3 \le M^2\|\bm H\|\|\bm S\|^2$, the remainder can be absorbed into the quadratic term for sufficiently small $\|\bm H\|$, giving $W \ge c\|\bm S\|^2$ for some $c > 0$.
	Substituting this bound into Eq.~\eqref{eq:neo-hooke-cond-bound} and applying the inverse inequality and the non-RBMs assumption we finally have
	\begin{equation}
		\kappa_W \le C C_I h_{\mathcal K}^{-1} \frac{\|\bm u\|_{\infty,\mathcal K}}{\|\sym(\nabla \bm u)\|} \le C C_I C_R h_{\mathcal K}^{-1} \eqqcolon C_W.
	\end{equation}
\end{proof}

\printbibliography

\end{document}

%% file: experiments/neo_hooke_performance_table.tex
\begin{tabular}{>{\columncolor{tabgray}}l|>{\columncolor{tabgray}}l||l|l|l||l|l|l}
    \rowcolor{tabgray}
     &  & \multicolumn{3}{c||}{unstable} & \multicolumn{3}{c}{stable} \\
    \rowcolor{tabgray}
    \# cells & type & plain & ``worst'' & ``exact'' & plain & ``worst'' & ``exact'' \\ \hline\hline
    \multirow{2}{*}{2708}
     & \texttt{std::float64\_t} & \multirow{2}{*}{\makecell[l]{$0.083$ \\ $\pm\, 0.006$}} & \makecell[l]{$0.149 \pm 0.004$ \\ ($\bm{1.80\times}$)} & \cellcolor{tabred}\makecell[l]{$1.29 \pm 0.05$ \\ ($\bm{15.57\times}$)} & \multirow{2}{*}{\makecell[l]{$0.095$ \\ $\pm\, 0.002$}} & \makecell[l]{$0.253 \pm 0.011$ \\ ($\bm{2.67\times}$)} & \makecell[l]{$0.183 \pm 0.007$ \\ ($\bm{1.93\times}$)} \\ \cline{2-2}\cline{4-5}\cline{7-8}
     & \texttt{std::float32\_t} &  & \makecell[l]{$0.167 \pm 0.005$ \\ ($\bm{2.01\times}$)} & \makecell[l]{$0.144 \pm 0.002$ \\ ($\bm{1.74\times}$)} &  & \makecell[l]{$0.328 \pm 0.043$ \\ ($\bm{3.46\times}$)} & \makecell[l]{$0.198 \pm 0.018$ \\ ($\bm{2.09\times}$)} \\ \hline
    \multirow{2}{*}{29030}
     & \texttt{std::float64\_t} & \multirow{2}{*}{\makecell[l]{$0.638$ \\ $\pm\, 0.012$}} & \makecell[l]{$1.66 \pm 0.10$ \\ ($\bm{2.60\times}$)} & \cellcolor{tabred}\makecell[l]{$13.62 \pm 0.58$ \\ ($\bm{21.34\times}$)} & \multirow{2}{*}{\makecell[l]{$0.882$ \\ $\pm\, 0.027$}} & \makecell[l]{$2.74 \pm 0.32$ \\ ($\bm{3.10\times}$)} & \makecell[l]{$1.77 \pm 0.04$ \\ ($\bm{2.01\times}$)} \\ \cline{2-2}\cline{4-5}\cline{7-8}
     & \texttt{std::float32\_t} &  & \makecell[l]{$1.78 \pm 0.08$ \\ ($\bm{2.78\times}$)} & \makecell[l]{$1.58 \pm 0.09$ \\ ($\bm{2.48\times}$)} &  & \makecell[l]{$2.61 \pm 0.02$ \\ ($\bm{2.95\times}$)} & \makecell[l]{$1.86 \pm 0.03$ \\ ($\bm{2.11\times}$)} \\ \hline
    \multirow{2}{*}{116054}
     & \texttt{std::float64\_t} & \multirow{2}{*}{\makecell[l]{$2.43$ \\ $\pm\, 0.02$}} & \makecell[l]{$6.09 \pm 0.06$ \\ ($\bm{2.50\times}$)} & \cellcolor{tabred}\makecell[l]{$52.39 \pm 0.24$ \\ ($\bm{21.54\times}$)} & \multirow{2}{*}{\makecell[l]{$3.17$ \\ $\pm\, 0.02$}} & \makecell[l]{$10.07 \pm 0.06$ \\ ($\bm{3.18\times}$)} & \makecell[l]{$6.98 \pm 0.04$ \\ ($\bm{2.20\times}$)} \\ \cline{2-2}\cline{4-5}\cline{7-8}
     & \texttt{std::float32\_t} &  & \makecell[l]{$6.61 \pm 0.06$ \\ ($\bm{2.72\times}$)} & \makecell[l]{$5.82 \pm 0.05$ \\ ($\bm{2.39\times}$)} &  & \makecell[l]{$10.39 \pm 0.05$ \\ ($\bm{3.28\times}$)} & \makecell[l]{$7.39 \pm 0.05$ \\ ($\bm{2.33\times}$)} \\
\end{tabular}

%% file: references.bib
@misc{croci2024kernels,
      author        = {Matteo Croci and Garth N. Wells},
      title         = {Mixed-precision finite element kernels and assembly: rounding error analysis and hardware acceleration},
      year          = {2024},
      eprint        = {2410.12614},
      archiveprefix = {arXiv},
      primaryclass  = {math.NA},
      doi           = {10.48550/arXiv.2410.12614},
      url           = {https://arxiv.org/abs/2410.12614}
}

@article{shakeri2024stabilenumerics,
      author        = {Rezgar Shakeri and Leila Ghaffari and Jeremy L. Thompson and Jed Brown},
      title         = {Stable numerics for finite-strain elasticity},
      journal       = {International Journal for Numerical Methods in Engineering},
      year          = {2024},
      volume        = {125},
      number        = {21},
      pages         = {e7563},
      doi           = {10.1002/nme.7563},
      url           = {https://doi.org/10.1002/nme.7563}
}

@article{laughton2022barycentric,
      author        = {Edward Laughton and Vidhi Zala and Akil Narayan and Robert M. Kirby and David Moxey},
      title         = {Fast barycentric-based evaluation over spectral/$hp$ elements},
      journal       = {Journal of Scientific Computing},
      year          = {2022},
      volume        = {90},
      number        = {2},
      pages         = {78},
      doi           = {10.1007/s10915-021-01750-2},
      url           = {https://doi.org/10.1007/s10915-021-01750-2}
}

@article{brubeck2025fiat,
      author        = {Pablo D. Brubeck and Robert C. Kirby and Fabian Laakmann and Lawrence Mitchell},
      title         = {{FIAT}: improving performance and accuracy for high-order finite elements},
      journal       = {ACM Transactions on Mathematical Software},
      year          = {2025},
      volume        = {51},
      number        = {3},
      articleno     = {21},
      pages         = {1--17},
      numpages      = {17},
      publisher     = {Association for Computing Machinery},
      doi           = {10.1145/3748816},
      url           = {https://doi.org/10.1145/3748816}
}

@article{higham2004barycentric,
      author        = {Nicholas J. Higham},
      title         = {The numerical stability of barycentric {Lagrange} interpolation},
      journal       = {IMA Journal of Numerical Analysis},
      year          = {2004},
      volume        = {24},
      number        = {4},
      pages         = {547--556},
      doi           = {10.1093/imanum/24.4.547},
      url           = {https://doi.org/10.1093/imanum/24.4.547}
}

@article{berrut2004interpolation,
      author        = {Jean-Paul Berrut and Lloyd N. Trefethen},
      title         = {Barycentric {Lagrange} interpolation},
      journal       = {SIAM Review},
      year          = {2004},
      volume        = {46},
      number        = {3},
      pages         = {501--517},
      doi           = {10.1137/S0036144502417715},
      url           = {https://doi.org/10.1137/S0036144502417715}
}

@techreport{melosh1969manipulationerrors,
      author        = {R. J. Melosh and E. L. Palacol},
      title         = {Manipulation errors in finite element analysis of structures},
      institution   = {National Aeronautics and Space Administration},
      number        = {NASA CR-1385},
      address       = {Washington, DC},
      month         = aug,
      year          = {1969},
      url           = {https://ntrs.nasa.gov/citations/19690024982}
}

@article{melosh1973inheritederror,
      author        = {R. J. Melosh},
      title         = {Inherited error in finite element analyses of structures},
      journal       = {Computers \& Structures},
      year          = {1973},
      volume        = {3},
      number        = {5},
      pages         = {1205--1217}
}

@article{utku1984solutionerrors,
      author        = {S. Utku and R. J. Melosh},
      title         = {Solution errors in finite element analysis},
      journal       = {Computers \& Structures},
      year          = {1984},
      volume        = {18},
      number        = {3},
      pages         = {379--393}
}

@article{fried1986roundoff,
      author        = {Isaac Fried},
      title         = {Round-off errors in the stiffness equation},
      journal       = {Computer Methods in Applied Mechanics and Engineering},
      year          = {1986},
      volume        = {57},
      number        = {2},
      doi           = {10.1016/0045-7825(86)90017-4},
      url           = {https://doi.org/10.1016/0045-7825(86)90017-4}
}

@article{babuska1981pversion,
      author        = {Ivo Babu{\v{s}}ka and Barna A. Szab{\'o} and I. Norman Katz},
      title         = {The $p$-version of the finite element method},
      journal       = {SIAM Journal on Numerical Analysis},
      year          = {1981},
      volume        = {18},
      number        = {3},
      pages         = {515--544},
      doi           = {10.1137/0718033},
      url           = {https://doi.org/10.1137/0718033}
}

@article{liu2021balancing,
      author        = {Jie Liu and Matthias M{\"o}ller and Henk M. Schuttelaars},
      title         = {Balancing truncation and round-off errors in {FEM}: one-dimensional analysis},
      journal       = {Journal of Computational and Applied Mathematics},
      year          = {2021},
      volume        = {386},
      pages         = {113219},
      doi           = {10.1016/j.cam.2020.113219},
      url           = {https://doi.org/10.1016/j.cam.2020.113219}
}

@article{alvarez2012roundoff,
      author        = {J. Alvarez-Aramberri and D. Pardo and Maciej Paszy{\'n}ski and Nathan Collier and Lisandro Dalcin and Victor M. Calo},
      title         = {On round-off error for adaptive finite element methods},
      journal       = {Procedia Computer Science},
      year          = {2012},
      volume        = {9},
      pages         = {1474--1483},
      doi           = {10.1016/j.procs.2012.04.162},
      url           = {https://doi.org/10.1016/j.procs.2012.04.162}
}

@article{babuska2018roundoff,
      author        = {Ivo Babu{\v{s}}ka and Gustaf S{\"o}derlind},
      title         = {On roundoff error growth in elliptic problems},
      journal       = {ACM Transactions on Mathematical Software},
      year          = {2018},
      volume        = {44},
      number        = {3},
      articleno     = {33},
      pages         = {1--22},
      numpages      = {22},
      publisher     = {Association for Computing Machinery},
      doi           = {10.1145/3134444},
      url           = {https://doi.org/10.1145/3134444}
}

@inproceedings{jouppi2017tpu,
      author        = {Norman P. Jouppi and others},
      title         = {In-datacenter performance analysis of a tensor processing unit},
      booktitle     = {Proceedings of the 44th Annual International Symposium on Computer Architecture},
      year          = {2017},
      pages         = {1--12},
      publisher     = {Association for Computing Machinery},
      doi           = {10.1145/3079856.3080246},
      url           = {https://doi.org/10.1145/3079856.3080246}
}

@misc{micikevicius2022fp8,
      author        = {Paulius Micikevicius and Dusan Stosic and Neil Burgess and Marius Cornea and Pradeep Dubey and Richard Grisenthwaite and Sangwon Ha and Alexander Heinecke and Patrick Judd and John Kamalu and Naveen Mellempudi and Stuart Oberman and Mohammad Shoeybi and Michael Siu and Hao Wu},
      title         = {{FP8} formats for deep learning},
      year          = {2022},
      eprint        = {2209.05433},
      archiveprefix = {arXiv},
      primaryclass  = {cs.LG},
      doi           = {10.48550/arXiv.2209.05433},
      url           = {https://arxiv.org/abs/2209.05433}
}

@article{abdelfattah2021survey,
      author        = {Ahmad Abdelfattah and Hartwig Anzt and Erik G. Boman and Erin Carson and Terry Cojean and Jack Dongarra and Alyson Fox and Mark Gates and Nicholas J. Higham and Xiaoye S. Li and Jennifer Loe and Piotr Luszczek and Srikara Pranesh and Siva Rajamanickam and Tobias Ribizel and Barry F. Smith and Kasia Swirydowicz and Stephen Thomas and Stanimire Tomov and Yaohung M. Tsai and Ulrike Meier Yang},
      title         = {A survey of numerical linear algebra methods utilizing mixed-precision arithmetic},
      journal       = {The International Journal of High Performance Computing Applications},
      year          = {2021},
      volume        = {35},
      number        = {4},
      pages         = {344--369},
      doi           = {10.1177/10943420211003313},
      url           = {https://doi.org/10.1177/10943420211003313}
}

@article{higham2022mixedprecision,
      author        = {Nicholas J. Higham and Theo Mary},
      title         = {Mixed precision algorithms in numerical linear algebra},
      journal       = {Acta Numerica},
      year          = {2022},
      volume        = {31},
      pages         = {347--414},
      doi           = {10.1017/S0962492922000022},
      url           = {https://doi.org/10.1017/S0962492922000022}
}

@article{williams2009roofline,
      author        = {Samuel Williams and Andrew Waterman and David Patterson},
      title         = {Roofline: an insightful visual performance model for multicore architectures},
      journal       = {Communications of the ACM},
      year          = {2009},
      volume        = {52},
      number        = {4},
      pages         = {65--76},
      doi           = {10.1145/1498765.1498785},
      url           = {https://doi.org/10.1145/1498765.1498785}
}

@book{wriggers2006contact,
      author        = {Peter Wriggers},
      title         = {Computational Contact Mechanics},
      edition       = {2},
      publisher     = {Springer},
      address       = {Berlin, Heidelberg},
      year          = {2006},
      doi           = {10.1007/978-3-540-32609-0},
      url           = {https://doi.org/10.1007/978-3-540-32609-0}
}

@article{simo1988multisurface,
      author        = {Juan C. Simo and John G. Kennedy and Sanjay Govindjee},
      title         = {Non-smooth multisurface plasticity and viscoplasticity: loading/unloading conditions and numerical algorithms},
      journal       = {International Journal for Numerical Methods in Engineering},
      year          = {1988},
      volume        = {26},
      number        = {10},
      pages         = {2161--2185},
      doi           = {10.1002/nme.1620261003},
      url           = {https://doi.org/10.1002/nme.1620261003}
}

@book{simo1998inelasticity,
      author        = {Juan C. Simo and Thomas J. R. Hughes},
      title         = {Computational Inelasticity},
      series        = {Interdisciplinary Applied Mathematics},
      volume        = {7},
      publisher     = {Springer},
      address       = {New York},
      year          = {1998},
      doi           = {10.1007/b98904},
      url           = {https://doi.org/10.1007/b98904}
}

@inproceedings{shewchuk2002elementquality,
      author        = {Jonathan Richard Shewchuk},
      title         = {What is a good linear finite element? Interpolation, conditioning, anisotropy, and quality measures},
      booktitle     = {Proceedings of the 11th International Meshing Roundtable},
      year          = {2002},
      pages         = {115--126},
      publisher     = {Sandia National Laboratories},
      url           = {https://people.eecs.berkeley.edu/~jrs/papers/elemj.pdf}
}

@article{kirby2006compiler,
      author        = {Robert C. Kirby and Anders Logg},
      title         = {A compiler for variational forms},
      journal       = {ACM Transactions on Mathematical Software},
      year          = {2006},
      volume        = {32},
      number        = {3},
      pages         = {417--444},
      doi           = {10.1145/1163641.1163644},
      url           = {https://doi.org/10.1145/1163641.1163644}
}

@article{alnaes2014ufl,
      author        = {Martin S. Aln{\ae}s and Anders Logg and Kristian B. {\O}lgaard and Marie E. Rognes and Garth N. Wells},
      title         = {{Unified Form Language}: a domain-specific language for weak formulations of partial differential equations},
      journal       = {ACM Transactions on Mathematical Software},
      year          = {2014},
      volume        = {40},
      number        = {2},
      articleno     = {9},
      pages         = {1--37},
      doi           = {10.1145/2566630},
      url           = {https://doi.org/10.1145/2566630}
}

@book{higham2002accuracy,
      author        = {Nicholas J. Higham},
      title         = {Accuracy and Stability of Numerical Algorithms},
      edition       = {2},
      publisher     = {Society for Industrial and Applied Mathematics},
      address       = {Philadelphia},
      year          = {2002},
      doi           = {10.1137/1.9780898718027},
      url           = {https://doi.org/10.1137/1.9780898718027}
}

@article{zahradnicky2010running,
      author        = {Tom{\'a}{\v{s}} Zahradnick{\'y} and Robert L{\'o}rencz},
      title         = {{FPU}-supported running error analysis},
      journal       = {Acta Polytechnica},
      year          = {2010},
      volume        = {50},
      number        = {2},
      pages         = {30--36},
      doi           = {10.14311/1167},
      url           = {https://doi.org/10.14311/1167}
}

@article{wilkinson1986erroranalysis,
      author        = {James H. Wilkinson},
      title         = {Error analysis revisited},
      journal       = {IMA Bulletin},
      year          = {1986},
      volume        = {22},
      number        = {11/12},
      pages         = {192--200},
      url           = {https://ima.org.uk/24908/error-analysis-revisited-1986/}
}

@incollection{braconnier2002cena,
      author        = {Thierry Braconnier and Philippe Langlois},
      title         = {From rounding error estimation to automatic correction with automatic differentiation},
      booktitle     = {Automatic Differentiation of Algorithms: From Simulation to Optimization},
      editor        = {George Corliss and Christ{\`e}le Faure and Andreas Griewank and Laurent Hasco{\"e}t and Uwe Naumann},
      pages         = {351--357},
      publisher     = {Springer},
      address       = {New York, NY},
      year          = {2002},
      doi           = {10.1007/978-1-4613-0075-5_42}
}

@incollection{hovland2021forwardcena,
      author        = {Paul D. Hovland and Jan H{\"u}ckelheim},
      title         = {Error estimation and correction using the forward {CENA} method},
      booktitle     = {Computational Science -- {ICCS} 2021},
      series        = {Lecture Notes in Computer Science},
      volume        = {12742},
      pages         = {765--778},
      publisher     = {Springer},
      address       = {Cham},
      year          = {2021},
      doi           = {10.1007/978-3-030-77961-0_61}
}

@article{demeure2022shaman,
      author        = {Nestor Demeure and C{\'e}dric Chevalier and Christophe Denis and Pierre Dossantos-Uzarralde},
      title         = {Algorithm 1029: encapsulated error, a direct approach to evaluate floating-point accuracy},
      journal       = {ACM Transactions on Mathematical Software},
      volume        = {48},
      number        = {4},
      articleno     = {47},
      pages         = {47:1--47:16},
      year          = {2022},
      doi           = {10.1145/3549205},
      note          = {Presents the Shaman library}
}

@article{chowdhary2022eftsanitizer,
      author        = {Sangeeta Chowdhary and Santosh Nagarakatte},
      title         = {Fast shadow execution for debugging numerical errors using error free transformations},
      journal       = {Proceedings of the ACM on Programming Languages},
      volume        = {6},
      number        = {OOPSLA2},
      pages         = {1845--1872},
      year          = {2022},
      doi           = {10.1145/3563353},
      note          = {Presents EFTSanitizer}
}

@misc{he2026accurateresidues,
      author        = {Yumeng He and Pavel Panchekha},
      title         = {Accurate residues for floating-point debugging},
      year          = {2026},
      eprint        = {2604.06258},
      archiveprefix = {arXiv},
      url           = {https://arxiv.org/abs/2604.06258}
}

@inproceedings{benz2012fpdebug,
      author        = {Florian Benz and Andreas Hildebrandt and Sebastian Hack},
      title         = {A dynamic program analysis to find floating-point accuracy problems},
      booktitle     = {Proceedings of the 33rd ACM SIGPLAN Conference on Programming Language Design and Implementation},
      series        = {PLDI '12},
      pages         = {453--462},
      publisher     = {Association for Computing Machinery},
      address       = {New York, NY, USA},
      year          = {2012},
      doi           = {10.1145/2254064.2254118},
      note          = {Presents FpDebug}
}

@inproceedings{chowdhary2020fpsanitizer,
      author        = {Sangeeta Chowdhary and Jay P. Lim and Santosh Nagarakatte},
      title         = {Debugging and detecting numerical errors in computation with posits},
      booktitle     = {Proceedings of the 41st ACM SIGPLAN Conference on Programming Language Design and Implementation},
      series        = {PLDI 2020},
      pages         = {731--746},
      publisher     = {Association for Computing Machinery},
      address       = {New York, NY, USA},
      year          = {2020},
      doi           = {10.1145/3385412.3386004},
      note          = {Presents PositDebug and the floating-point tool FPSanitizer}
}

@inproceedings{chowdhary2021pfpsanitizer,
      author        = {Sangeeta Chowdhary and Santosh Nagarakatte},
      title         = {Parallel shadow execution to accelerate the debugging of numerical errors},
      booktitle     = {Proceedings of the 29th ACM Joint Meeting on European Software Engineering Conference and Symposium on the Foundations of Software Engineering},
      series        = {ESEC/FSE 2021},
      pages         = {615--626},
      publisher     = {Association for Computing Machinery},
      address       = {New York, NY, USA},
      year          = {2021},
      doi           = {10.1145/3468264.3468585},
      note          = {Presents PFPSanitizer}
}

@inproceedings{courbet2021nsan,
      author        = {Cl{\'e}ment Courbet},
      title         = {{NSan}: a floating-point numerical sanitizer},
      booktitle     = {Proceedings of the 30th ACM SIGPLAN International Conference on Compiler Construction},
      series        = {CC 2021},
      pages         = {83--93},
      publisher     = {Association for Computing Machinery},
      address       = {New York, NY, USA},
      year          = {2021},
      doi           = {10.1145/3446804.3446848}
}

@inproceedings{sanchezstern2018herbgrind,
      author        = {Alex Sanchez-Stern and Pavel Panchekha and Sorin Lerner and Zachary Tatlock},
      title         = {Finding root causes of floating point error},
      booktitle     = {Proceedings of the 39th ACM SIGPLAN Conference on Programming Language Design and Implementation},
      series        = {PLDI 2018},
      pages         = {256--269},
      publisher     = {Association for Computing Machinery},
      address       = {New York, NY, USA},
      year          = {2018},
      doi           = {10.1145/3192366.3192411},
      note          = {Presents Herbgrind}
}

@inproceedings{lam2016shval,
      author        = {Michael O. Lam and Barry L. Rountree},
      title         = {Floating-point shadow value analysis},
      booktitle     = {2016 5th Workshop on Extreme-Scale Programming Tools ({ESPT})},
      pages         = {18--25},
      publisher     = {IEEE},
      year          = {2016},
      doi           = {10.1109/ESPT.2016.007},
      note          = {Presents the SHVAL framework}
}

@article{jezequel2008cadna,
      author        = {Fabienne J{\'e}z{\'e}quel and Jean-Marie Chesneaux},
      title         = {{CADNA}: a library for estimating round-off error propagation},
      journal       = {Computer Physics Communications},
      volume        = {178},
      number        = {12},
      pages         = {933--955},
      year          = {2008},
      doi           = {10.1016/j.cpc.2008.02.003}
}

@inproceedings{denis2016verificarlo,
      author        = {Christophe Denis and Pablo de Oliveira Castro and Eric Petit},
      title         = {{Verificarlo}: checking floating point accuracy through {Monte Carlo} arithmetic},
      booktitle     = {2016 IEEE 23rd Symposium on Computer Arithmetic ({ARITH})},
      pages         = {55--62},
      publisher     = {IEEE},
      year          = {2016},
      doi           = {10.1109/ARITH.2016.31}
}

@misc{fevotte2016verrou,
      author        = {Fran{\c{c}}ois F{\'e}votte and Bruno Lathuili{\`e}re},
      title         = {{VERROU}: assessing floating-point accuracy without recompiling},
      year          = {2016},
      month         = oct,
      howpublished  = {HAL working paper},
      note          = {HAL identifier hal-01383417},
      url           = {https://hal.science/hal-01383417}
}

@article{graillat2011sam,
      author        = {Stef Graillat and Fabienne J{\'e}z{\'e}quel and Shiyue Wang and Yuxiang Zhu},
      title         = {Stochastic arithmetic in multiprecision},
      journal       = {Mathematics in Computer Science},
      volume        = {5},
      number        = {4},
      pages         = {359--375},
      year          = {2011},
      doi           = {10.1007/s11786-011-0103-4},
      note          = {Presents the SAM library}
}

@inproceedings{menon2018adapt,
      author        = {Harshitha Menon and Michael O. Lam and Daniel Osei-Kuffuor and Markus Schordan and Scott Lloyd and Kathryn M. Mohror and Jeffrey Hittinger},
      title         = {{ADAPT}: algorithmic differentiation applied to floating-point precision tuning},
      booktitle     = {Proceedings of the International Conference for High Performance Computing, Networking, Storage, and Analysis},
      series        = {SC '18},
      pages         = {48:1--48:13},
      publisher     = {IEEE Press},
      year          = {2018},
      doi           = {10.1109/SC.2018.00051}
}

@inproceedings{singh2023cheffp,
      author        = {Garima Singh and Baidyanath Kundu and Harshitha Menon and Alexander Penev and David J. Lange and Vassil Vassilev},
      title         = {Fast and automatic floating point error analysis with {CHEF-FP}},
      booktitle     = {2023 IEEE International Parallel and Distributed Processing Symposium ({IPDPS})},
      pages         = {1018--1028},
      publisher     = {IEEE},
      year          = {2023},
      doi           = {10.1109/IPDPS54959.2023.00105}
}

@article{zou2020atomu,
      author        = {Daming Zou and Muhan Zeng and Yingfei Xiong and Zhoulai Fu and Lu Zhang and Zhendong Su},
      title         = {Detecting floating-point errors via atomic conditions},
      journal       = {Proceedings of the ACM on Programming Languages},
      volume        = {4},
      number        = {POPL},
      articleno     = {60},
      pages         = {60:1--60:27},
      year          = {2020},
      doi           = {10.1145/3371128},
      note          = {Presents the ATOMU tool}
}

@article{yi2024fpcc,
      author        = {Xin Yi and Hengbiao Yu and Liqian Chen and Xiaoguang Mao and Ji Wang},
      title         = {{FPCC}: detecting floating-point errors via chain conditions},
      journal       = {Proceedings of the ACM on Programming Languages},
      volume        = {8},
      number        = {OOPSLA2},
      pages         = {1504--1531},
      year          = {2024},
      doi           = {10.1145/3689764}
}

@misc{baratta2025dolfinx,
      author        = {Baratta, Igor A. and Dean, Joseph P. and Dokken, J\o{}rgen S. and Habera, Michal and Hale, Jack S. and Richardson, Chris N. and Rognes, Marie E. and Scroggs, Matthew W. and Sime, Nathan and Wells, Garth N.},
      title         = {{DOLFINx}: the next generation {FEniCS} problem solving environment},
      month         = dec,
      year          = 2025,
      publisher     = {Zenodo},
      doi           = {10.5281/zenodo.18101307},
      url           = {https://doi.org/10.5281/zenodo.18101307}
}

@misc{zilian2026dolfiny,
      author        = {Zilian, Andreas and Habera, Michal and K{\"u}hner, Paul T.},
      title         = {{dolfiny}: high-level and convenience wrappers for {DOLFINx}},
      year          = {2026},
      howpublished  = {\url{https://github.com/fenics-dolfiny/dolfiny}},
      note          = {Version 0.12.0.dev0, accessed 2026-07-21}
}

@misc{habera2026rea,
      doi           = {10.5281/zenodo.23020888},
      url           = {https://zenodo.org/doi/10.5281/zenodo.23020888},
      author        = {Habera, Michal and K{\"u}hner, Paul T., and Croci, Matteo and Zilian, Andreas},
      title         = {Running Error Analysis library},
      publisher     = {Zenodo},
      year          = {2026},
      copyright     = {MIT License}
}

@misc{fenics2026ffcx,
      author        = {{FEniCS}},
      title         = {{FFCx}: the {FEniCSx} form compiler},
      year          = {2026},
      howpublished  = {\url{https://github.com/FEniCS/ffcx}},
      note          = {Version 0.12.0.dev0, accessed 2026-07-21}
}

@misc{kuhner2026ffcxbackends,
      doi           = {10.5281/zenodo.23034916},
      url           = {https://zenodo.org/doi/10.5281/zenodo.23034916},
      author        = {K{\"u}hner, Paul T., and Habera, Michal and Pachev, Benjamin and Andreas, Zilian},
      title         = {FFCx-backends},
      publisher     = {Zenodo},
      year          = {2026},
      copyright     = {GNU Lesser General Public License v3.0 only}
}

@misc{fptalks2026community,
      author        = {{FPTalks}},
      title         = {{FPTalks} community: floating-point tools and resources},
      year          = {2026},
      howpublished  = {\url{https://fptalks.org/community.html}},
      note          = {Accessed 2026-08-10}
}

@article{pence2014neohookean,
      title         = {On compressible versions of the incompressible neo-{Hookean} material},
      volume        = {20},
      issn          = {1741-3028},
      url           = {http://dx.doi.org/10.1177/1081286514544258},
      doi           = {10.1177/1081286514544258},
      number        = {2},
      journal       = {Mathematics and Mechanics of Solids},
      publisher     = {SAGE Publications},
      author        = {Pence,  Thomas J and Gou,  Kun},
      year          = {2015},
      month         = feb,
      pages         = {157--182}
}

@misc{habera2026automated,
      title         = {Automated dimensional analysis for {PDEs}},
      author        = {Habera, Michal and Zilian, Andreas},
      year          = {2026},
      eprint        = {2601.06535},
      archiveprefix = {arXiv},
      primaryclass  = {cs.MS},
      doi           = {10.48550/arXiv.2601.06535}
}

@article{dekker1971,
      author        = {Dekker, Theodorus J.},
      title         = {A floating-point technique for extending the available precision},
      journal       = {Numerische Mathematik},
      volume        = {18},
      pages         = {224--242},
      year          = {1971},
      doi           = {10.1007/BF01397083}
}

@article{ogita2005,
      author        = {Ogita, Takeshi and Rump, Siegfried M. and Oishi, Shin'ichi},
      title         = {Accurate sum and dot product},
      journal       = {SIAM Journal on Scientific Computing},
      volume        = {26},
      number        = {6},
      pages         = {1955--1988},
      year          = {2005},
      doi           = {10.1137/030601818}
}

@book{trefethen1997,
      author        = {Trefethen, Lloyd N. and Bau, David, III},
      title         = {Numerical Linear Algebra},
      publisher     = {Society for Industrial and Applied Mathematics},
      address       = {Philadelphia, PA},
      year          = {1997},
      isbn          = {978-0-89871-361-9}
}

@book{brenner2008fem,
      author        = {Susanne C. Brenner and L. Ridgway Scott},
      title         = {The Mathematical Theory of Finite Element Methods},
      edition       = {3},
      series        = {Texts in Applied Mathematics},
      volume        = {15},
      publisher     = {Springer},
      address       = {New York},
      year          = {2008},
      doi           = {10.1007/978-0-387-75934-0}
}

@article{lange2020faithfully,
      author        = {Marko Lange and Siegfried M. Rump},
      title         = {Faithfully rounded floating-point computations},
      journal       = {ACM Transactions on Mathematical Software},
      volume        = {46},
      number        = {3},
      articleno     = {21},
      numpages      = {20},
      year          = {2020},
      doi           = {10.1145/3290955}
}

@book{moore2009interval,
      author        = {Ramon E. Moore and R. Baker Kearfott and Michael J. Cloud},
      title         = {Introduction to Interval Analysis},
      publisher     = {Society for Industrial and Applied Mathematics},
      address       = {Philadelphia, PA},
      year          = {2009},
      isbn          = {978-0-89871-669-6},
      doi           = {10.1137/1.9780898717716}
}

@book{muller2018handbook,
      author        = {Jean-Michel Muller and Nicolas Brunie and Florent de Dinechin and Claude-Pierre Jeannerod and Mioara Joldes and Vincent Lef{\`e}vre and Guillaume Melquiond and Nathalie Revol and Serge Torres},
      title         = {Handbook of Floating-Point Arithmetic},
      edition       = {2},
      publisher     = {Birkh{\"a}user},
      address       = {Cham},
      year          = {2018},
      doi           = {10.1007/978-3-319-76526-6},
      isbn          = {978-3-319-76525-9}
}

@manual{aapcs64,
      author        = {{Arm Limited}},
      title         = {Procedure call standard for the {Arm} 64-bit architecture ({AArch64})},
      organization  = {Arm Limited},
      year          = {2026},
      month         = jan,
      note          = {AAPCS64, release 2025Q4, Section 10.1.1, Table 3},
      url           = {https://github.com/ARM-software/abi-aa/blob/main/aapcs64/aapcs64.rst},
      urldate       = {2026-08-17}
}

@software{cppjit,
      author        = {{compiler-research}},
      title         = {{CppJIT}: fast and automatic {Python}--{C++} interoperability},
      url           = {https://github.com/compiler-research/cppjit},
      year          = {2026},
      note          = {GitHub repository}
}

@misc{nanobind,
      author        = {Jakob Wenzel},
      year          = {2022},
      note          = {https://github.com/wjakob/nanobind},
      title         = {{nanobind}: tiny and efficient {C++}/{Python} bindings}
}

@misc{nvidiarubin2026,
      author        = {{NVIDIA Corporation}},
      title         = {{NVIDIA Vera Rubin NVL72}},
      year          = {2026},
      url           = {https://www.nvidia.com/en-us/data-center/vera-rubin-nvl72/},
      note          = {Accessed: 2026-09-07}
}
